\DeclareMathOperator*{\argmin}{arg\,min}
\DeclareMathOperator*{\logdet}{log\,det}
\DeclareMathOperator*{\diag}{Diag}
\def\A{{\mathcal A}}
\def\B{{\mathcal B}}
\def\C{{\mathcal C}}
\def\D{{\mathcal D}}
\def\G{{\mathcal G}}
\def\H{{\mathcal H}}
\def\I{{\mathcal I}}
\def\J{{\mathcal J}}
\def\L{{\mathcal L}}
\def\M{{\mathcal M}}
\def\N{{\mathcal N}}
\def\P{{\mathcal P}}
\def\Q{{\mathcal Q}}
\def\S{{\mathcal S}}
\def\T{{\mathcal T}}
\def\W{{\mathcal W}}
\def\X{{\mathcal X}}
\def\Y{{\mathcal Y}}
\def\Z{{\mathcal Z}}
\newcommand{\Rmnum}[1]{\expandafter\@slowromancap\romannumeral #1@}
\newtheorem{theorem}{Theorem}[section]
\newtheorem{lemma}{Lemma}[section]
\newtheorem{assumption}{Assumption}
\newproof{proof}{Proof}
\journal{}
\begin{document}

\begin{frontmatter}

\title{A Majorized-Generalized Alternating Direction Method of Multipliers for Convex Composite Programming}

%% Group authors per affiliation:

%% or include affiliations in footnotes:
%\tnotetext[title1]{?}

\author[a1]{Congying Qin}
\ead{qcymath@163.com}
\author[a2]{Yunhai Xiao}
\ead{yhxiao@henu.edu.cn}
\author[a3]{Peili Li}
\ead{plli@sfs.ecnu.edu.cn}

\address[a1]{School of Mathematics and Statistics,
Henan University, Kaifeng 475000, China}
\address[a2]{Henan Engineering Research Center for Artificial Intelligence Theory and Algorithms,
Henan University, Kaifeng 475000, China}
\address[a3]{School of Statistics, Key Laboratory of Advanced Theory and Application in Statistics and Data Science-MOE, East China Normal University, Shanghai 200062, P.R. China}

\begin{abstract}
The linearly constrained convex composite programming problems whose objective function contains two blocks with each block being the form of nonsmooth+smooth arises frequently in multiple fields of applications. If both of the smooth terms are quadratic, this problem can be solved efficiently by using the symmetric Gaussian-Seidel (sGS) technique based proximal alternating direction method of multipliers (ADMM). However, in the non-quadratic case, the sGS technique can not be used any more, which leads to the separable structure of nonsmooth+smooth had to be ignored. In this paper, we present a generalized ADMM and particularly use a majorization technique to make the corresponding subproblems more amenable to efficient computations.
Under some appropriate conditions, we prove its global convergence for the relaxation factor in $(0,2)$. We apply the algorithm to solve a kind of simulated convex composite optimization problems and a type of sparse inverse covariance matrix estimation problems which illustrates that the effectiveness of the algorithm are obvious.
\end{abstract}

\begin{keyword}
Convex composite programming; alternating direction method of multipliers; Douglas-Rachford splitting method; majorization function; iteration-complexity.
\end{keyword}

\end{frontmatter}

\section{Introduction}\label{section1}
% The very first letter is a 2 line initial drop letter followed
% by the rest of the first word in caps.
%
% form to use if the first word consists of a single letter:
% \IEEEPARstart{A}{demo} file is ....
%
% form to use if you need the single drop letter followed by
% normal text (unknown if ever used by the IEEE):
% \IEEEPARstart{A}{}demo file is ....
%
% Some journals put the first two words in caps:
% \IEEEPARstart{T}{his demo} file is ....
%
% Here we have the typical use of a "T" for an initial drop letter
% and "HIS" in caps to complete the first word.
Let $\X:=\X_1\times\X_2\times\ldots\times\X_s$, $\Y:=\Y_1\times\Y_2\times\ldots\times\Y_t$ and $\Z$ be real finite dimensional Euclidean spaces each equipped with
an inner product $\langle\cdot,\cdot\rangle$ and its induced norm $\left\|\cdot\right\|$.
Let $x_i\in\X_i$ for each $i=1,\ldots,s$ and $y_j\in\Y_j$ for each $j=1,\ldots,t$, and write $x:=(x_1,x_2,\ldots,x_s)\in\X$ and $y:=(y_1,y_2,\ldots,y_t)\in\Y$.
Let $p(x)\equiv p_1(x_1)$ and $q(y)\equiv q_1(y_1)$ where $p_1:\X_1 \rightarrow(-\infty,+\infty]$ and $q_1:\Y_1 \rightarrow(-\infty,+\infty]$ are two simple closed proper
convex (not necessarily smooth) functions.
The primary motivation of this paper
is to develop an efficient method to solve the following convex composite optimization problem
\begin{equation}\label{pro}
\min_{x\in\X,y\in\Y}\bigg\{p(x)+f(x)+q(y)+g(y) \; | \; \A^*x+\B^*y=c \bigg\},
\end{equation}
where  $f:\X \rightarrow(-\infty,+\infty)$ and
$g :\Y \rightarrow (-\infty,+\infty)$ are two convex (not necessarily quadratic) functions with Lipschitz continous gradients
on $\X$ and $\Y$, respectively; $\A:\Z \rightarrow \X$ and $\B:\Z \rightarrow \Y$ are two linear maps with their adjoints $\A^*:\X \rightarrow \Z$ and $\B^*:\Y \rightarrow \Z$,
respectively; and $c\in \Z$ is a given data.
There exist many different variants of practical problems such as the robust principle component analysis of Wright et al. \cite{RPCA} satisfy the form (\ref{pro}), and generally, the Lagrangian dual of convex quadratic semidefinite programming and convex quadratic programming
falls within the form (\ref{pro}), see \cite{LIPHD}. Hence, studying problem (\ref{pro}) both theoretically and numerically is very important.

Let $\sigma\in (0,+\infty)$ be a penalty parameter.
For any $(x,y,z)\in \X\times \Y\times \Z$, the augmented Lagrangian function associated with problem (\ref{pro}) is given by
\begin{equation}\label{augf}
{\L}_{\sigma}(x,y;z)= p(x)+{f}(x)+q(y)+{g}(y)
+\langle z,\A^*x+\B^*y-c\rangle+\frac{\sigma}{2}\|\A^*x+\B^*y-c\|^2,
\end{equation}
where $z\in \Z$ be a multiplier associated with the constraint.
The alternating direction method of multipliers (ADMM) minimizes (\ref{augf}) alternatively with respect to $x$ and $y$ in
the following iterative form, see e.g., \cite{FAZ},
\begin{equation}\label{admm}
\left\{
\begin{array}{l}
x^{k+1}:=\argmin_{x\in \X}\: p(x)+f(x)+\frac{\sigma}{2}\|\A^*x+\B^*y^k-c+z^k/\sigma\|^2+\frac{1}{2}\|x-x^k\|_{\S}^2,
\\[2mm]
y^{k+1}:=\argmin_{y\in \Y }\: q(y)+g(y)+\frac{\sigma}{2}\|\A^*x^{k+1}+\B^*y-c+z^k/\sigma\|^2+\frac{1}{2}\|y-y^k\|_{\T}^2,
\\[2mm]
z^{k+1}:=z^{k}+\tau\sigma\big(\A^*x^{k+1}+\B^*y^{k+1}-c\big),
\end{array}
\right.
\end{equation}
where $\tau\in(0,(1+\sqrt{5})/2)$ is a step length; $\S$ and $\T$ are self-adjoint and positive
semidefinite linear operators. When $f$ and $g$ are convex quadratic functions, the presence
of the operator $\S$ and $\T$ may help to decompose the subproblem into some smaller problems, see \cite{LIPHD}.
Otherwise, due to the nonseparabilities of the non-quadratic functions $f$ and $g$, the
special structure of nonsmooth+smooth hidden in each block had to be ignored, therefore, it is a generally time consuming task to solve the $x$- and $y$-subproblems efficiently.

In optimization literature, one popular way to deal with a continuously differentially nonseparable convex function is the using of majorization \cite{ROC1}.
For example, Hong et al. \cite{HONG} considered linearly constrained convex optimization
problems whose objectives contain coupled functions and provided a majorized  ADMM with assuming that the step length $\tau$ is small enough.
The drawback of choosing a sufficient small step length is immediately remedied by Cui et al. \cite{CUI}.
Meanwhile, Li et al. \cite{LI} proposed a majorized ADMM for solving
(\ref{pro}) where $f$ and $g$ are not necessarily quadratic functions.
One distinctive feature of their majorized ADMM is that, if the symmetric Gaussian-Seidel (sGS) manner \cite{LIXD,SGSTH} is used, it generally decomposes the $x$- and $y$-subproblems of (\ref{admm}) into $2(s+t-1)$ smaller problems. Moreover, the quantity of the resulting smaller subproblems can be greatly reduced if the simple error tolerance criteria of Chen et al. \cite{CHEMMP} are used.

We should mention that all the methods reviewed above are based upon the classic ADMM originated from Glowinski \& Marroco \cite{GLOWINSKI75} and Gabay \& Mercier \cite{GABAY76} in the mid 1970s.
Later, Gabay \cite{GABAY83} showed that this type of ADMM with an unit step-length is equivalent to the Douglas-Rachford splitting (DRs) method \cite{DRS} to find the roots of the sum of two maximal monotone operators.
In early 1990s, Eckstein $\&$ Bertsekas \cite{ECK} showed that the DRs itself is an instance of the proximal point algorithm \cite{roc76} applied to a specially generated splitting operator.
Based on this crucial observation, Eckstein $\&$ Bertsekas \cite{ECK} introduced a generalized DRs with a over-relaxation factor which has been shown
to converge faster.
This in turn yielded a generalized-ADMM for convex programming which has been proved to be faster and also be
more widely applicable.
Recently, Xiao et al. \cite{XIAO} proposed a variant of generalized ADMM with semi-proximal terms, which can be used to solve various doubly non-negative semidefinite programming problems for moderate accuracy.

The successful applications of the generalized ADMM of Eckstein \& Bertsekas \cite{ECK} to solving multi-block problems inevitably inspire us to consider using it to solve the convex composite optimization problem (\ref{pro}). If proximal terms are added, the iterative scheme should follow the following form
\begin{equation}\label{gad}
\left\{
\begin{array}{l}
x^{k+1}:=\argmin\limits_{x\in \X}\: p(x)+f(x)+\frac{\sigma}{2}\|\A^*x+\B^*y^k-c+z^k/\sigma\|^2+\frac{1}{2}\|x-x^k\|_{\S}^2,
\\[2mm]
y^{k+1}:=\argmin\limits_{y\in \Y }\: q(y)+g(y)+\frac{\sigma}{2}\|\rho\A^*x^{k+1}-(1-\rho)\B^*y^k+\B^*y-c+z^k/\sigma\|^2+\frac{1}{2}\|y-y^k\|_{\T}^2,
\\[2mm]
z^{k+1}:=z^{k}+\sigma\big(\rho\A^*x^{k+1}-(1-\rho)\B^*y^k+\B^*y^{k+1}-c\big),
\end{array}
\right.
\end{equation}
where $\rho\in(0,2)$ is a relaxation factor. For $\rho=1$, the above iterative scheme is exactly \eqref{admm} with $\tau=1$.
However, also due to the nonseparabilities of $f$ and $g$, the favourable nonsmooth+smooth structure located in $x$- and $y$-subproblems can not be fully utilized.
To address this issue, as in \cite{LI,LINT}, we employ a majorization technique to iteratively
replace $f$ and $g$ by quadratic functions for the purpose of making both subproblems  more
amenable to efficient computations. In other words, when $f$ and $g$ are majorized by convex quadratic functions and the sGS technique of Li et al. \cite{LIXD,SGSTH} is used, then the $x$-subproblem ({\itshape resp. $y$-subproblem}) can be decomposed into pieces of smaller problems involving only the variable $x_i$ ({\itshape resp.} $y_j$) for each $i=1,\ldots,s$ ({\itshape resp.} $j=1,\ldots,t$).
To derive a general theoretical convergence result, in this paper, we particularly concentrate on the convergence analysis of the majorized and generalized ADMM for solving the convex composite optimization problem (\ref{pro}).
To implement the proposed algorithm, we also illustrate how to choose the positive semi-definite linear operators $\S$ and $\T$ for purpose of splitting each subproblem when the sGS technique is used.
Finally, using some simulated convex composite optimization problems and some sparse inverse covariance matrix estimation problems,
we do some numerical experiments  versus the majorized ADMM of Li et al. \cite{LI} and sGS-ADMM of Li $\&$ Xiao \cite{LIXIAO}.
The results demonstrate that the variant of generalized-ADMM with a special relaxation factor generally performs better.

The remaining parts of this paper are organized as follows.
In section \ref{section2}, we provide some useful results, which will be used for later theoretical analysis.
In section \ref{section3}, we present a majorized and generalized ADMM to solve the convex composite problem (\ref{pro}) followed by some important properties.
Moreover, the choosing of the proximal terms to split the subproblems is also included in this section.
Then, we focus on the convergence analysis of the proposed algorithm in section \ref{section4}.
In section \ref{section6}, we are devoted to implementation issue and illustrative examples to show the potential numerical efficiency of the proposed algorithm.
Finally, we conclude this paper with some remarks in section \ref{section7}.

\section{Preliminaries}\label{section2}

Note that $f$ and $g$ are smooth convex functions with Lipschitz continuous gradient, and their Hessian exist almost everywhere, then it is known that there exist two self-adjoint and positive semidefinite operators $\Sigma_f$ and $\Sigma_g$ such that
\begin{equation}\label{F1}
f(x)\geq f(x')+\langle x-x',\nabla f(x') \rangle+\frac{1}{2}\|x-x'\|_{\Sigma_f}^2, \ \ \forall \ x,x'\in \X,
\end{equation}
\begin{equation}\label{G1}
g(y)\geq g(y')+\langle y-y',\nabla g(y') \rangle+\frac{1}{2}\|y-y'\|_{\Sigma_g}^2, \ \ \forall \ y,y'\in \Y,
\end{equation}
and there also exist two self-adjoint and positive semidefinite operators $\widehat{\Sigma}_f $ and $\widehat{\Sigma}_g$ such that
\begin{equation}\label{F2}
f(x)\leq \hat{f}(x;x')=:f(x')+\langle x-x',\nabla f(x') \rangle+
\frac{1}{2}\|x-x'\|_{\widehat{\Sigma}_f}^2, \ \ \forall \ x,x'\in \X,
\end{equation}
\begin{equation}\label{G2}
g(y)\leq \hat{g}(y;y')=: g(y')+\langle y-y',\nabla g(y') \rangle+
\frac{1}{2}\|y-y'\|_{\widehat{\Sigma}_g}^2, \ \ \forall \ y,y'\in \Y,
\end{equation}
where $\hat{f}$ and $\hat{g}$ are called the majorized convex functions of $f$ and $g$, respectively.
The Clarke's generalized Hessian of $f$ at $x\in \X$ is defined as
$$
\partial^2 f(x):= \text{conv}\Big\{ \lim_{x^k \rightarrow x} \nabla^2 f(x^k): \nabla^2 f(x^k) \ \text{exists} \Big\},
$$
where conv denotes the convex hull. It is known from \cite[Proposition 2.6.2]{CLA} that $\partial^2 f(x)$ is a nonempty convex compact set.
Let $W\in \partial^2 f(x)$ be a self-adjoint and positive semidefinite linear operator,
the second order Mean-Value Theorem indicates that, for
$x,x'\in \X$, there exist $\omega\in[x',x]$ and $W\in \partial^2 f(\omega)$ such that
$$
f(x)=f(x')+\langle x-x',\nabla f(x') \rangle+\frac{1}{2}\|x-x'\|_{W}^2.
$$
Since $\nabla f$ is assumed to be globally Lipschitz continuous, then each given $ W\in \partial^2 f(x)$ satisfies
\begin{equation}\label{GG}
\Sigma_f \leq  W \leq \widehat{\Sigma}_f ,
\end{equation}
where $\Sigma_f$ and $\widehat{\Sigma}_f$ are independent of $x$.

Denote $\varphi(x):=p(x)+f(x)$ and $\phi(y):=q(y)+g(y)$, then (\ref{pro}) is rewritten as the following convex minimization problem
\begin{equation}\label{mp}
\begin{array}{ll}
\min\limits_{x\in\X,y\in\Y } & \varphi(x)+\phi(y) \\
\mbox{s.t.} & \A^* x+\B^* y=c,
\end{array}
\end{equation}
where $\varphi:\mathcal{X}\rightarrow (-\infty,+\infty]$ and $\phi:\mathcal{Y}\rightarrow (-\infty,+\infty]$ are closed proper convex. The dual of problem (\ref{mp}) is equivalent to
\begin{equation}\label{dmp}
\min_{z\in\Z} \Big\{\langle c,z \rangle+\varphi^{*}(-\A z)+\phi^{*}(-\B z)\Big\},
\end{equation}
where $\varphi^*(\cdot)$ and $\phi^*(\cdot)$ are Fenchel conjugate of $\varphi(\cdot)$ and $\phi(\cdot)$, respectively. Assuming that the Karush-Kuhn-Tucker (KKT) system associated with (\ref{mp}) and (\ref{dmp}) is not empty, then solving (\ref{dmp}) is equivalent to finding zeros for the following inclusion problem
\begin{equation}\label{mpin}
0\in (\T_1+\T_2)(z),
\end{equation}
where
$$
\T_1(z)=c-\A^*\partial \varphi^*(-\A z),\quad\text{and}\quad\T_2(z)=-\B^*\partial \phi^*(-\B z).
$$
Denote $\J_{c\T}:=(\I+c\T)^{-1}$, i.e., a resolvent operator, then the DRs  \cite{DRS} for solving (\ref{mpin}) is with the form
\begin{equation}\label{drs}
z^{k+1}=\J_{c\T_2}(w^{k+1}) \quad \text{with}\quad w^{k+1}=[\J_{c\T_1}\circ(2\J_{c\T_2}-\I)+(\I-\J_{c\T_2})](w^k) \quad\big(=:\G_{c\T_1\T_2}(\omega_k)\big),
\end{equation}
where $\circ$ denotes functional composition.
Denote $\S_{c\T_1\T_2}:=\G_{c\T_1\T_2}^{-1}-\I$, where $\I$ is an identity operator, then the DRs (\ref{drs}) can be rewritten as
$
w^{k+1}=(\I+\S_{c\T_1\T_2})^{-1}(w^k),
$
which is the proximal point algorithm of Rockfellar \cite{roc76}.
Instead of (\ref{drs}), Eckstein $\&$ Bertsekas \cite{ECK} considered a generalized version of DRs with form
\begin{equation}\label{gdrs}
w^{k+1}=(1-\rho)w^k+\rho\G_{c\T_1\T_2}(w^k),
\end{equation}
where $\rho\in(0,2)$ is called a relaxation factor. It is from \cite[Theorem 8]{roc76} or \cite[Theorem 4.1]{WUC} that, the generalized DRs (\ref{gdrs}) along with the relation $z^{k+1}=\J_{c\T_2}(w^{k+1})$ is exactly the iterative scheme (\ref{gad}) by noting that $\varphi(x)\equiv p(x)+f(x)$ and $\phi(y)\equiv q(y)+g(y)$.

Throughout this paper, the following condition is assumed to be true.
\begin{assumption}\label{assum}
Assume that there exists at least one $(x,y)$ such that $(x,y)\in ri(dom(p)\times dom(q))\cap P $ with $P:=\{(x,y)\in\X \times \Y \; |\; \A^*x+\B^*y=c$\}.
\end{assumption}

Under this assumption, it follows from \cite[Corollaries 28.2.2 and 28.3.1]{ROC1} that $(\bar{x},\bar{y})$ is an optimal solution to problem (\ref{pro}) if and only if there exist a $\bar{z}\in \Z $ such that $(\bar{x},\bar{y},\bar{z})\in\W:=\X\times\Y\times\Z$ satisfies the following Karush-Kuhn-Tucker (KKT) system
\begin{equation}\label{KKT}
0\in \partial p(\bar{x})+\nabla f(\bar{x})+\A\bar{z}, \quad 0\in \partial q(\bar{y})+\nabla g(\bar{y})+\B\bar{z}
\quad\mbox{and}\quad \A^*\bar{x}+\B^*\bar{y}=c,
\end{equation}
where $\partial p(x)$ and $\partial q(y) $ are the subdifferential mappings of $p$ and $q$, respectively.
Recall that $p$ and $q$ are convex functions, then system (\ref{KKT}) is equivalent to the following variational inequality
\begin{equation}\label{VA}
(p(x)+q(y))-(p(\bar{x})+q(\bar{y}))+
\langle x-\bar{x},\nabla f(\bar{x})+\A\bar{z} \rangle + \langle y-\bar{y},\nabla g(\bar{y})+\B\bar{z} \rangle
\geq 0, \ \ \forall (x,y)\in\X\times\Y.
\end{equation}
We denote by $\W^*$ the solution set of (\ref{VA}) which is nonempty under Assumption \ref{assum}.

\section{A majorized and generalized ADMM}\label{section3}

We have known that the ADMM minimizes the augmented Lagrangian function (\ref{augf}) alternatively with respect to $x$ and $y$, and takes the form (\ref{admm}) if the classic version of \cite{FAZ} is used and the form  (\ref{gad}) if the generized variant of \cite{ECK} is used.
However, it is generally time consuming if $f$ and $g$ are not quadratic functions because the separated structure located in each subproblem can not be fully utilized. To address this issue, Li et al. \cite{LI} used a majorized augmented Lagrangian function to replace (\ref{augf}) and proved that the corresponding algorithm (\ref{admm}) converges globally under some appropriate conditions.
As a re-product,  we mainly concentrate on the generalized ADMM iterative scheme (\ref{gad}) and show that if the majorization technique used, the attractive properties as \cite{LI} are also possessed.

\subsection{Algorithm's construction}

For the sake of simplicity, we denote
$$
\P:=\small{\widehat{\Sigma}_f}+\S+\sigma\A\A^* \quad \mbox{and} \quad  \Q:=\small{\widehat{\Sigma}_g}+\T+\sigma\B\B^*.
$$
On the one hand, at current iteration, we use the majorized function $\hat f(x;x^k)$ to replace the non-quadratic function $f$ in the $x$-subproblem of (\ref{gad}), that is
\begin{align*}
x^{k+1}:=&\argmin\limits_{x\in \X} \ \Big\{p(x)+\hat f(x;x^k)+\frac{\sigma}{2}\|\A^*x+\B^*y^k-c+z^k/\sigma\|^2+\frac{1}{2}\|x-x^k\|_{\S}^2\Big\}\\
        =&\argmin\limits_{x\in\X} \ \Big\{p(x)+\frac{1}{2}\langle x,\P x\rangle
+\big\langle \nabla f(x^k)+\sigma\A(\A^*x^k+\B^*y^k-c+\sigma^{-1}z^k)-\P x^k ,x \big\rangle\Big\}.
\end{align*}
On the other hand, we use $\hat g(y;y^k)$ to replace $g$ in the $y$-subproblem of (\ref{gad}), that is
\begin{align*}
y^{k+1}:=&\argmin\limits_{y\in \Y } \  \Big\{q(y)+\hat g(y;y^k)+\frac{\sigma}{2}\|\rho\A^*x^{k+1}-(1-\rho)\B^*y^k+\B^*y-\rho c+z^k/\sigma\|^2+\frac{1}{2}\|y-y^k\|_{\T}^2\Big\}\\
        =&\argmin\limits_{y\in\Y} \ \Big\{q(y)+\frac{1}{2}\langle y,\Q y \rangle
+\big\langle \nabla g(y^k)+\sigma\rho\B(\A^*x^k+\B^*y^k-c+(\sigma\rho)^{-1}z^k)\\
&\qquad \qquad \quad+\sigma\rho\B\A^*(x^{k+1}-x^k)-\Q y^k ,y \big\rangle\Big\}.
\end{align*}
Comparing with (\ref{gad}), it is clear to see that both subproblems have the character of nonsmooth+quadratic.
We will show later that if a sGS technique is used, this favorable structure can guarantee to decompose into some smaller problems.
In light of the above analyses, the full steps of the majorized and generalized ADMM for solving the convex composite optimization problem
(\ref{pro}) (abbr. MGADMM) are summarized as follows:

\begin{algorithm}
\noindent
{\bf Algorithm: MGADMM}
\vskip 1.0mm \hrule \vskip 1mm
\noindent
\textbf{Step 0.} Let $\rho\in(0,2)$ and $\sigma>0$. Choose $\S:\X\rightarrow\X$ and $\T:\Y\rightarrow\Y$ such that $\P\succeq 0$ and $\Q\succeq 0$, i.e., self-adjoint and positive definite. Choose an initial point $(x^0,y^0,z^0)\in dom(p) \times dom(q) \times \Z $. Let $k:=0$.\\
\textbf{Step 1.} Terminate some stopping criterions are satisfied. Otherwise, compute
\begin{equation}\label{mgadmm}
\left\{
\begin{array}{l}
x^{k+1}:=\argmin\limits_{x\in\X}  \Big\{p(x)+\frac{1}{2}\langle x,\P x \rangle
+\big\langle \nabla f(x^k)+\sigma\A(\A^*x^k+\B^*y^k-c+\sigma^{-1}z^k)-\P x^k ,x \big\rangle\Big\}\\
y^{k+1}:=\argmin\limits_{y\in\Y}  \Big\{q(y)+\frac{1}{2}\langle y,\Q y \rangle
+\big\langle \nabla g(y^k)+\sigma\rho\B(\A^*x^k+\B^*y^k-c+(\sigma\rho)^{-1}z^k)\\[2mm]
\qquad \qquad \quad+\sigma\rho\B\A^*(x^{k+1}-x^k)-\Q y^k ,y \big\rangle\Big\}\\[2mm]
z^{k+1}:=z^k+\sigma\Big(\rho\A^*x^{k+1}-(1-\rho)\B^*y^k+\B^*y^{k+1}-\rho c\Big).
\end{array}
\right.
\end{equation}
\textbf{Step 2.} Let $k:=k+1$ and go to Step 1.
\end{algorithm}

We discuss how to choose the proximal terms $\S$ and $\T$ to decompose each subproblem into some smaller subproblems.
We only take the $x$-subproblem as an example, because the $y$-subproblem can be solved in a similar way.
Noting that $\P\equiv\widehat{\Sigma}_f+\S+\sigma\A\A^*$,
then the $x$-subproblem in (\ref{mgadmm}) reduces to
\begin{align*}
x^{k+1}&=\argmin\limits_{x\in\X} \ \Big\{p(x_1)+\frac{1}{2}\big\langle x,(\widehat{\Sigma}_f+\sigma\A\A^*) x\big\rangle\\
&+\big\langle \nabla f(x^k)+\sigma\A(\B^*y^k-c+\sigma^{-1}z^k)-\widehat{\Sigma}_f x^k, x \big\rangle+\frac12\|x-x^k\|^2_{\S}\Big\}.
\end{align*}
For convenience, we denote
$$
\H:=\widehat{\Sigma}_f+\sigma\A\A^*,\quad\text{and}\quad r:=\nabla f(x^k)+\sigma\A(\B^*y^k-c+\sigma^{-1}z^k)-\widehat{\Sigma}_f x^k,
$$
and define
$$
h(x)=\frac12\langle x,\H x\rangle +\langle r,x\rangle.
$$
If $\H$ is further assumed to be positive definite, then $h$ is a strictly convex quadratic function. For $\H$, we split it as $\H=\M+\D+\M^*$ with
$$
\M=\left(
\begin{array}{cccc}
0    &    \H_{12}    &     \cdots    &    \H_{1s}     \\
     &    \ddots     &     \cdots    &    \vdots      \\
     &               &     0         &    \H_{(s-1)s} \\
     &               &               &      0
\end{array}
\right), \ \text{and} \
\D=\diag(\H_{11},\cdots,\H_{ss}).
$$
It is from \cite[Theorem 1]{SGSTH} that, if we specifically choose $\S=\M\D^{-1}\M^*$ then the optimal solution $x^{k+1}$ to $x$-subproblem in (\ref{mgadmm})
can be obtained by two stages, that is, firstly computing $\tilde x^k_i\in\X_i$ in an order of $i=s,\ldots,2$ by
$$
\tilde x^k_i:=\argmin_{x_i\in\X_i} h(x^k_{<i},x_i,\tilde x^k_{>i}),
$$
and then computing $x^{k+1}_i\in\X_i$ in an order of $i=1,\ldots,s$ by
\begin{align*}
x^{k+1}_1:=&\argmin_{x_i\in\X_i} \ p(x_1)+ h(x_1,\tilde x^k_{>1})\\
x^{k+1}_i:=&\argmin_{x_i\in\X_i} \ h(x^{k+1}_{<i},x_i,\tilde x^k_{>i}).
\end{align*}
The approach used above shows that if a general function $f(x)$ is replaced by its majorized function $\hat f(x;x^k)$, then the larger problem
can be decomposed into $2s-1$ smaller parts with respect to each $x_i\in\X_i$ and then solves it correspondingly via its favorable structure.
This is the well-known sGS technique of Li et al. \cite{LIXD,SGSTH}, which
has been widely and successfully used to solve many multi-block conic programming problems over the recent years, such as \cite{CHEMMP,XIAO}.

\subsection{Some useful lemmas}
We now report some key inequalities used to prove the convergence results in the next section.
For notational convenience,  we denote
\begin{equation}\label{MN}
\M_f:=\frac{1}{2}\Sigma_f+\S+\frac{1}{2}\sigma(1-\lambda)(2-\rho)\A\A^*, \quad \text{and} \quad
\N_g:=\frac{1}{2}\Sigma_g+\T+\sigma\frac{\lambda(2-\rho)^2}{\rho}\B\B^*,
\end{equation}
where $\lambda\in(0,1]$. For any $(x,y,z)\in \W$,  we also denote
\begin{equation}\label{f}
\phi_k(x,y,z):=(\sigma\rho)^{-1}\|z^k-z+\sigma(\rho-1)\B^*(y^k-y)\|^2+\|x^k-x\|^2_{\widehat{\Sigma}_f+\S}
+\|y^k-y\|^2_{\widehat{\Sigma}_g+\T+\sigma(2-\rho)\B\B^*},
\end{equation}
and
\begin{align}
\theta_k:=&\|x^k-x^{k-1}\|^2_{\frac{1}{2}\Sigma_f+\S}+\|y^k-y^{k-1}\|^2_{\frac{1}{2}\Sigma_g+\T},\label{t}\\[2mm]
\vartheta_k:=&\|x^k-x^{k-1}\|^2_{\M_f}+\|y^k-y^{k-1}\|^2_{\N_g},\label{v}\\[2mm]
\zeta_k:=&\|y^k-y^{k-1}\|^2_{\widehat{\Sigma}_g+\T}.\label{z}
\end{align}
Besides, the following elementary identities are used frequently in the subsequent theoretical analysis.
\begin{lemma}\label{lem1}
For any vectors $u, v$ in the same finite dimensional real Euclidean space,  it holds that
\begin{equation}\label{iec}
\frac{1}{2}\|u\|_{\G}^2 +\frac{1}{2}\|v\|_{\G}^2 \geq  \frac{1}{4}\|u-v\|_{\G}^2,
\end{equation}
and
\begin{equation}\label{id2}
2\langle u,\G v \rangle=\|u\|_{\G}^2+\|v\|_{\G}^2-\|u-v\|_{\G}^2
=\|u+v\|_{\G}^2-\|u\|_{\G}^2-\|v\|_{\G}^2,
\end{equation}
where $\mathcal{G}$ is an arbitrary self-adjoint positive semidefinite linear operator.
\end{lemma}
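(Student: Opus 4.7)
The plan is to establish the polarization-style identity \eqref{id2} first by a direct algebraic expansion, and then to obtain \eqref{iec} as an immediate consequence.

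For \eqref{id2}, I would unfold $\|u\pm v\|_{\G}^2$ using the definition $\|w\|_{\G}^2=\langle w,\G w\rangle$ together with bilinearity of the inner product. Since $\G$ is self-adjoint, the two cross terms coincide, $\langle u,\G v\rangle=\langle v,\G u\rangle$, so
$$
\|u\pm v\|_{\G}^2=\|u\|_{\G}^2\pm 2\langle u,\G v\rangle+\|v\|_{\G}^2.
$$
Isolating $2\langle u,\G v\rangle$ on the left of either display gives the two equalities in \eqref{id2}. This step is purely mechanical.

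For \eqref{iec}, the cleanest route is to apply the identity just derived to $u+v$ and invoke positive semidefiniteness. Specifically, $\|u+v\|_{\G}^2\ge 0$ because $\G\succeq 0$, which by the polarization identity rearranges to $2\langle u,\G v\rangle\ge -(\|u\|_{\G}^2+\|v\|_{\G}^2)$. Substituting this bound into the expansion $\|u-v\|_{\G}^2=\|u\|_{\G}^2+\|v\|_{\G}^2-2\langle u,\G v\rangle$ yields $\|u-v\|_{\G}^2\le 2(\|u\|_{\G}^2+\|v\|_{\G}^2)$, which is exactly \eqref{iec} after dividing by $4$.

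There is no real obstacle here; the result is a standard seminorm identity. The only points worth flagging are that the self-adjointness of $\G$ is precisely what allows the two cross terms to collapse into a single $2\langle u,\G v\rangle$, and that its positive semidefiniteness is what ensures $\|\cdot\|_{\G}$ behaves like a (semi)norm so that $\|u+v\|_{\G}^2\ge 0$ is at our disposal.
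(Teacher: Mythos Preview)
Your argument is correct; the polarization identity follows from expanding $\|u\pm v\|_{\G}^2$ using self-adjointness of $\G$, and \eqref{iec} is just $\|u+v\|_{\G}^2\ge 0$ in disguise. The paper itself offers no proof, simply presenting these as elementary identities, so there is nothing further to compare.
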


Recall that $(\bar{x},\bar{y},\bar{z})\in \W^*$ be an arbitrary solution to the KKT system (\ref{KKT}).
For convenience, we denote
$x_{e}:=x-\bar{x}$, $y_{e}:=y-\bar{y}$ and $z_{e}:=z-\bar{z}$ for any $(x,y,z)\in \W$.
The following three lemmas play key rules in establishing the convergence theorem of algorithm MGADMM.
\begin{lemma}\label{lem3}
Let $\{(x^k, y^k, z^k )\}$ be the sequence generated by algorithm MGADMM and $(\bar{x},\bar{y},\bar{z})\in \W^*$. For any $\rho\in(0,2)$ and $k\geq0$, it holds that
\begin{equation}\label{l3}
\begin{aligned}
&\Big\langle\A^*x^{k+1}_{e}+\B^*y^{k+1}_{e}, z^{k+1}_{e}+\sigma(\rho-1)\B^*y^{k+1}_{e}\Big\rangle\\[2mm]
=&\frac{1}{2\sigma\rho}\Big[\|z^{k+1}_{e}+\sigma(\rho-1)\B^*y^{k+1}_{e}\|^2
-\|z^k_{e}+\sigma(\rho-1)\B^*y^k_{e}\|^2\Big]
+\frac{\sigma\rho}{2}\|\A^*x^{k+1}_{e}+B^*y^{k+1}_{e}\|^2.\\[2mm]
\end{aligned}
\end{equation}
\end{lemma}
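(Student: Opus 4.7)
The identity is essentially an algebraic rearrangement driven by the multiplier update together with the KKT feasibility $\A^*\bar x+\B^*\bar y=c$, so no convexity or monotonicity is needed; it is pure Euclidean bookkeeping.

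My plan is to introduce the shorthand $u_k:=z^k_e+\sigma(\rho-1)\B^* y^k_e$ so that both sides of \eqref{l3} take a compact form: the left-hand side becomes $\langle \A^*x^{k+1}_e+\B^* y^{k+1}_e,\,u_{k+1}\rangle$, and the bracketed difference on the right-hand side becomes $\|u_{k+1}\|^2-\|u_k\|^2$. The whole proof then reduces to (i) expressing $u_{k+1}-u_k$ in terms of $\A^*x^{k+1}_e+\B^* y^{k+1}_e$, and (ii) applying the polarization identity in Lemma~\ref{lem1}.

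For step (i), I start from the multiplier update
\[
z^{k+1}=z^k+\sigma\bigl(\rho\A^*x^{k+1}-(1-\rho)\B^*y^k+\B^*y^{k+1}-\rho c\bigr),
\]
subtract $\bar z$, and use $\rho c=\rho\A^*\bar x+\rho\B^*\bar y$ to rewrite the right-hand side purely in terms of the error variables $x^{k+1}_e$, $y^{k+1}_e$, $y^k_e$. A direct regrouping yields
\[
z^{k+1}_e-z^k_e=\sigma\rho\A^*x^{k+1}_e+\sigma\B^*y^{k+1}_e+\sigma(\rho-1)\B^* y^k_e.
\]
Adding $\sigma(\rho-1)(\B^* y^{k+1}_e-\B^* y^k_e)$ to both sides then produces the clean relation
\[
u_{k+1}-u_k=\sigma\rho\bigl(\A^*x^{k+1}_e+\B^* y^{k+1}_e\bigr).
\]

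For step (ii), this relation lets me write $\A^*x^{k+1}_e+\B^* y^{k+1}_e=\tfrac{1}{\sigma\rho}(u_{k+1}-u_k)$, so
\[
\bigl\langle \A^*x^{k+1}_e+\B^* y^{k+1}_e,\,u_{k+1}\bigr\rangle=\tfrac{1}{\sigma\rho}\bigl\langle u_{k+1}-u_k,\,u_{k+1}\bigr\rangle.
\]
Applying the identity $2\langle a-b,a\rangle=\|a\|^2-\|b\|^2+\|a-b\|^2$ from Lemma~\ref{lem1} with $a=u_{k+1}$ and $b=u_k$, and then substituting $\|u_{k+1}-u_k\|^2=\sigma^2\rho^2\|\A^*x^{k+1}_e+\B^* y^{k+1}_e\|^2$, reproduces the right-hand side of \eqref{l3} exactly.

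I do not anticipate a real obstacle here; the only thing requiring a little care is keeping track of the sign conventions when peeling off $\rho c$ with the KKT identity and verifying that the coefficient of $\B^* y^k_e$ in $u_{k+1}-u_k$ cancels properly. Once that cancellation is recorded, the rest is a one-line application of the polarization identity.
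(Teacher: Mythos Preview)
Your proposal is correct and follows essentially the same route as the paper: both derive the key relation $u_{k+1}-u_k=\sigma\rho(\A^*x^{k+1}_e+\B^*y^{k+1}_e)$ from the multiplier update together with $\A^*\bar x+\B^*\bar y=c$, and then finish with the polarization identity of Lemma~\ref{lem1}. Your shorthand $u_k$ makes the bookkeeping slightly cleaner, but the argument is the same.
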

\begin{proof}
From the third implementation in (\ref{mgadmm}), we get
\begin{equation}\label{zz}
\begin{aligned}
z^{k+1}&=z^{k}+\sigma\big(\rho\A^*x^{k+1}-(1-\rho)\B^*y^{k}+\B^*y^{k+1}-\rho c\big)\\[2mm]
&=z^{k}+\sigma\rho\big(\A^*x^{k+1}+\B^*y^{k+1}-c\big)-\sigma(\rho-1)\B^*(y^{k+1}-y^k),
\end{aligned}
\end{equation}
which means
\begin{equation}\label{zz22}
\A^*x^{k+1}+\B^*y^{k+1}-c=\frac{1}{\sigma\rho}\Big(z^{k+1}-z^{k}+\sigma(\rho-1)\B^*(y^{k+1}-y^k)\Big).
\end{equation}
The equality (\ref{zz}) is equivalent to
$$
[z^{k+1}+\sigma(\rho-1)\B^*y^{k+1}]-[z^k+\sigma(\rho-1)\B^*y^k]
=\sigma\rho(\A^*x^{k+1}+\B^*y^{k+1}-c),
$$
and hence
\begin{equation}\label{zz1}
[z^{k+1}_{e}+\sigma(\rho-1)\B^*y^{k+1}_{e}]-[z^k_{e}+\sigma(\rho-1)\B^*y^k_{e}]
=\sigma\rho(\A^*x^{k+1}_{e}+\B^*y^{k+1}_{e}).
\end{equation}
Using the first identity in (\ref{id2}) and using (\ref{zz1}), we get
\begin{equation}\label{l31}
\begin{aligned}
&2\sigma\rho\langle \A^*x^{k+1}_{e}+\B^*y^{k+1}_{e},z^{k+1}_{e}+\sigma(\rho-1)\B^*y^{k+1}_{e}\rangle\\[2mm]
=&\sigma^2\rho^2\|\A^*x^{k+1}_{e}+\B^*y^{k+1}_{e}\|^2+\|z^{k+1}_{e}+\sigma(\rho-1)\B^*y^{k+1}_{e}\|^2
-\|z^k_{e}+\sigma(\rho-1)\B^*y^k_{e}\|^2,
\end{aligned}
\end{equation}
which is exactly the assertion (\ref{l3}).
\end{proof}

\begin{lemma}\label{lem4}
Let $\{(x^k, y^k, z^k )\}$ be the sequence generated by algorithm MGADMM and $(\bar{x},\bar{y},\bar{z})\in \W^*$. For any $\rho\in(0,2)$ and $k\geq0$, it holds that
\begin{equation}\label{l4}
\begin{aligned}
&\langle\A^*x^{k+1}_{e},\sigma(\rho-1)(A^*x^{k+1}+\B^*y^k-c)+\sigma\B^*(y^{k+1}-y^k)\rangle\\[2mm]
=&\frac{\sigma(\rho-1)}{2}\|\A^*x^{k+1}_{e}\|^2-\frac{\sigma(\rho-2)}{2}\|\B^*y^k_{e}\|^2
-\frac{\sigma}{2}\|\B^*y^{k+1}_{e}\|^2\\[2mm]
&\quad+\frac{\sigma(\rho-2)}{2}\|\A^*x^{k+1}_{e}+\B^*y^k_{e}\|^2+\frac{\sigma}{2}\|\A^*x^{k+1}_{e}+\B^*y^{k+1}_{e}\|^2.
\end{aligned}
\end{equation}
\end{lemma}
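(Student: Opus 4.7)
The plan is to reduce the identity (\ref{l4}) to a direct calculation using the KKT feasibility $\A^*\bar x+\B^*\bar y=c$ and the polarization identity from Lemma \ref{lem1}. The main observation is that the quantities appearing on both sides are essentially bilinear in $\A^*x^{k+1}_e$, $\B^*y^k_e$, $\B^*y^{k+1}_e$, so no information from the algorithm (the updates for $x^{k+1}$, $y^{k+1}$ or $z^{k+1}$) should be needed — only the definitions of the error variables and a standard quadratic expansion. This should make the proof purely algebraic and self-contained.

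First I would rewrite the two vectors appearing inside the inner product on the left-hand side in terms of the error variables. Since $(\bar x,\bar y,\bar z)\in\W^*$ satisfies $\A^*\bar x+\B^*\bar y=c$, we have
\begin{equation*}
\A^*x^{k+1}+\B^*y^k-c=\A^*x^{k+1}_{e}+\B^*y^k_{e},\qquad \B^*(y^{k+1}-y^k)=\B^*y^{k+1}_{e}-\B^*y^k_{e}.
\end{equation*}
Substituting these into the left-hand side of (\ref{l4}) and expanding the inner product produces the three terms
\begin{equation*}
\sigma(\rho-1)\|\A^*x^{k+1}_{e}\|^2+\sigma(\rho-2)\langle \A^*x^{k+1}_{e},\B^*y^k_{e}\rangle+\sigma\langle \A^*x^{k+1}_{e},\B^*y^{k+1}_{e}\rangle,
\end{equation*}
where the coefficient $\rho-2$ arises from collecting the contributions $(\rho-1)\B^*y^k_e$ and $-\B^*y^k_e$.

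Next I would apply the polarization identity $2\langle u,v\rangle=\|u+v\|^2-\|u\|^2-\|v\|^2$ from Lemma \ref{lem1} (with $\G=\I$) separately to the two cross-terms $\langle \A^*x^{k+1}_{e},\B^*y^k_{e}\rangle$ and $\langle \A^*x^{k+1}_{e},\B^*y^{k+1}_{e}\rangle$. This converts them into combinations of $\|\A^*x^{k+1}_e+\B^*y^k_e\|^2$, $\|\A^*x^{k+1}_e+\B^*y^{k+1}_e\|^2$, $\|\A^*x^{k+1}_e\|^2$, $\|\B^*y^k_e\|^2$, and $\|\B^*y^{k+1}_e\|^2$. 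The only remaining task is to collect the coefficient of $\|\A^*x^{k+1}_{e}\|^2$, which turns out to be $\sigma(\rho-1)-\tfrac{\sigma(\rho-2)}{2}-\tfrac{\sigma}{2}=\tfrac{\sigma(\rho-1)}{2}$, matching exactly the coefficient on the right-hand side of (\ref{l4}).

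There is no serious obstacle here; the argument is a one-shot algebraic identity and does not rely on any of the optimality conditions for the subproblems in (\ref{mgadmm}). The only place one can slip is bookkeeping in the coefficient of $\|\A^*x^{k+1}_{e}\|^2$ and in tracking the sign of $(\rho-2)$ versus $(2-\rho)$ when one term is moved from one side of the equation to the other, so I would verify that combination carefully at the end.
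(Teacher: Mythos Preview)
Your proposal is correct and follows essentially the same approach as the paper: rewrite the left-hand side in terms of the error variables using $\A^*\bar x+\B^*\bar y=c$, expand into the three terms $\sigma(\rho-1)\|\A^*x^{k+1}_e\|^2+\sigma(\rho-2)\langle\A^*x^{k+1}_e,\B^*y^k_e\rangle+\sigma\langle\A^*x^{k+1}_e,\B^*y^{k+1}_e\rangle$, and then apply the polarization identity from Lemma~\ref{lem1} to each cross term before collecting coefficients. Your observation that the identity is purely algebraic and does not use any optimality conditions of the subproblems is also accurate.
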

\begin{proof}
Noticing the fact $\A^*\bar{x}+\B^*\bar{y}=c$, we obtain
\begin{equation}\label{l41}
\begin{aligned}
&\langle\A^*x^{k+1}_{e},\sigma(\rho-1)(A^*x^{k+1}+\B^*y^k-c)+\sigma\B^*(y^{k+1}-y^k)\rangle\\[2mm]
=&\langle\A^*x^{k+1}_{e},\sigma(\rho-1)A^*x^{k+1}_{e}+\sigma(\rho-2)B^*y^k_{e}+\sigma\B^*y^{k+1}_{e}\rangle\\[2mm]
=&\sigma(\rho-1)\|\A^*x^{k+1}_{e}\|^2+\sigma(\rho-2)\langle\A^*x^{k+1}_{e},B^*y^k_{e}\rangle
+\sigma\langle\A^*x^{k+1}_{e},\B^*y^{k+1}_{e}\rangle.
\end{aligned}
\end{equation}
Using the the second identity in (\ref{id2}), we have
\begin{equation}\label{l42}
\begin{aligned}
&\sigma(\rho-2)\langle\A^*x^{k+1}_{e},B^*y^k_{e}\rangle
=\frac{\sigma(\rho-2)}{2}\big(\|\A^*x^{k+1}_{e}+\B^*y^k_{e}\|^2
-\|\A^*x^{k+1}_{e}\|^2-\|\B^*y^k_{e}\|^2\big),
\end{aligned}
\end{equation}
and
\begin{equation}\label{l43}
\begin{aligned}
&\sigma\langle\A^*x^{k+1}_{e},\B^*y^{k+1}_{e}\rangle
=\frac{\sigma}{2}\big(\|\A^*x^{k+1}_{e}+\B^*y^{k+1}_{e}\|^2-\|\A^*x^{k+1}_{e}\|^2-\|\B^*y^k_{e}\|^2\big).\\[2mm]
\end{aligned}
\end{equation}
Then, substituting (\ref{l42}) and (\ref{l43}) into (\ref{l41}), we can get (\ref{l4}).
\end{proof}

\begin{lemma}\label{lem5}
Let $\{(x^k, y^k, z^k )\}$ be the sequence generated by algorithm MGADMM and $(\bar{x},\bar{y},\bar{z})\in \W^*$. For any $\rho\in(0,2)$ and $k\geq0$, it holds that
\begin{equation}\label{in}
\phi_k(\bar{x},\bar{y},\bar{z})-\phi_{k+1}(\bar{x},\bar{y},\bar{z})\geq \theta_{k+1}+\sigma(2-\rho)\|\A^*x^{k+1}_{e}+\B^*y^k_{e}\|^2,
\end{equation}
where $\phi_k$ and $\theta_k$ are defined in (\ref{f}) and (\ref{t}), respectively.
\end{lemma}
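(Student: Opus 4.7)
The plan is to obtain a single telescoping inequality by combining the first-order optimality conditions of the two subproblems with the KKT variational inequality at $(\bar x,\bar y,\bar z)$, and then using Lemmas~\ref{lem3} and \ref{lem4} to rearrange the cross-terms.

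First I would write down the subdifferential conditions for the $x$- and $y$-subproblems of \eqref{mgadmm}. Since $\P = \widehat{\Sigma}_f + \S + \sigma\A\A^*$, the $x$-step gives some $\xi^{k+1} \in \partial p(x^{k+1})$ satisfying
\[
\xi^{k+1} + \nabla f(x^k) + (\widehat{\Sigma}_f+\S)(x^{k+1}-x^k) + \A\bigl(z^k + \sigma(\A^*x^{k+1}+\B^*y^k-c)\bigr) = 0,
\]
and analogously for the $y$-step, using $\Q = \widehat{\Sigma}_g+\T+\sigma\B\B^*$ and the $\rho$-relaxed residual. Testing these against $x^{k+1}-\bar x$ and $y^{k+1}-\bar y$ respectively, adding them, and using monotonicity of $\partial p$ and $\partial q$ together with the KKT relation \eqref{VA} at $(\bar x,\bar y,\bar z)$, I expect to arrive at an inequality of the form
\[
0 \;\geq\; T_{\text{grad}} \;+\; T_{\widehat\Sigma\S\T} \;+\; T_{\text{mult}},
\]
where $T_{\text{grad}}$ collects the gradient inner products, $T_{\widehat\Sigma\S\T}$ the proximal/majorization cross terms, and $T_{\text{mult}}$ the multiplier/coupling cross terms.

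Next I would bound $T_{\text{grad}}$. Writing $\langle \nabla f(x^k) - \nabla f(\bar x),\, x^{k+1}-\bar x\rangle$ as $\langle \nabla f(x^k) - \nabla f(\bar x),\, x^k-\bar x\rangle + \langle \nabla f(x^k)-\nabla f(\bar x),\, x^{k+1}-x^k\rangle$, the first summand is bounded below by $\|x^k-\bar x\|^2_{\Sigma_f}$ via two applications of \eqref{F1}, and the second can be split with the Cauchy–Schwarz/Young inequality so that, combined with the contribution of $\widehat{\Sigma}_f$ from $T_{\widehat\Sigma\S\T}$, it yields $\tfrac12\|x^{k+1}-x^k\|^2_{\Sigma_f}$ plus a clean quadratic $\|x^{k+1}-\bar x\|^2_{\widehat{\Sigma}_f} - \|x^k-\bar x\|^2_{\widehat{\Sigma}_f}$ after applying \eqref{id2}. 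An identical treatment with \eqref{G1} handles the $g$-terms. This is exactly how the $\tfrac12\Sigma_f + \S$ and $\tfrac12\Sigma_g + \T$ weights that define $\theta_{k+1}$ emerge.

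For $T_{\text{mult}}$, I would rewrite $z^k + \sigma(\A^*x^{k+1}+\B^*y^k-c)$ and the analogous $y$-step multiplier residual in terms of $z^{k+1}$ using the update formula, specifically the identity
\[
z^{k+1}-z^k = \sigma\rho(\A^*x^{k+1}+\B^*y^{k+1}-c) - \sigma(\rho-1)\B^*(y^{k+1}-y^k)
\]
derived in \eqref{zz}–\eqref{zz22}. The coupling $\langle \A^*x^{k+1}_e+\B^*y^{k+1}_e,\, z^{k+1}_e+\sigma(\rho-1)\B^*y^{k+1}_e\rangle$ is then converted into the telescoping $(\sigma\rho)^{-1}$-weighted difference that appears in $\phi_k$ by Lemma~\ref{lem3}. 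The remaining $\sigma(\rho-1)\|\A^*x^{k+1}_e\|^2 + \sigma\langle\A^*x^{k+1}_e,\B^*(y^{k+1}-y^k)\rangle$ contribution from the $y$-subproblem, after using $\A^*\bar x+\B^*\bar y=c$, is exactly the left-hand side of \eqref{l4}, so Lemma~\ref{lem4} converts it into the terms $-\tfrac{\sigma(\rho-2)}{2}\|\B^*y^k_e\|^2 - \tfrac{\sigma}{2}\|\B^*y^{k+1}_e\|^2 + \tfrac{\sigma(\rho-2)}{2}\|\A^*x^{k+1}_e+\B^*y^k_e\|^2 + \tfrac{\sigma}{2}\|\A^*x^{k+1}_e+\B^*y^{k+1}_e\|^2$. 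The $\B\B^*$ pieces combine with the $\widehat{\Sigma}_g+\T+\sigma(2-\rho)\B\B^*$ weight in $\phi_k$ to produce the desired telescoping; the $\|\A^*x^{k+1}_e+\B^*y^k_e\|^2$ piece produces precisely the residual term $\sigma(2-\rho)\|\A^*x^{k+1}_e+\B^*y^k_e\|^2$ on the right-hand side of \eqref{in} (note $\rho-2<0$, so a sign flip places it on the correct side).

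The main obstacle I expect is bookkeeping: keeping the $\rho$-dependent coefficients consistent between the $y$-subproblem's modified constraint residual and the multiplier update, and ensuring that the coefficients of $\|\B^*(y^{k+1}-y^k)\|^2$, $\|\A^*x^{k+1}_e+\B^*y^{k+1}_e\|^2$, and the cross terms $\langle\A^*x^{k+1}_e,\B^*y^k_e\rangle$ that arise in multiple places all cancel cleanly. Once all cross-terms are grouped using \eqref{id2} and Lemmas~\ref{lem3}–\ref{lem4}, the right-hand side should collapse to exactly $\theta_{k+1} + \sigma(2-\rho)\|\A^*x^{k+1}_e+\B^*y^k_e\|^2$.
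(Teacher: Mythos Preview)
Your overall architecture matches the paper's: optimality conditions of the two subproblems tested against $x^{k+1}-\bar x$ and $y^{k+1}-\bar y$, combined with the KKT variational inequality \eqref{VA}, with the multiplier cross-terms reorganised via Lemmas~\ref{lem3}--\ref{lem4} and the proximal cross-terms via \eqref{id2}. On that level the proposal is correct and essentially the same route as the paper.

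The one place where your plan diverges, and where it is not convincing as written, is the treatment of $T_{\text{grad}}$. You split $\langle \nabla f(x^k)-\nabla f(\bar x),\,x^{k+1}_e\rangle$ as $\langle \nabla f(x^k)-\nabla f(\bar x),\,x^k_e\rangle + \langle \nabla f(x^k)-\nabla f(\bar x),\,x^{k+1}-x^k\rangle$ and bound the first summand by $\|x^k_e\|_{\Sigma_f}^2$; but then this non-telescoping term has nowhere to go, and for the second summand a Cauchy--Schwarz/Young bound controls $\nabla f(x^k)-\nabla f(\bar x)$ only in terms of $x^k_e$ (via $\widehat\Sigma_f$), not $x^{k+1}-x^k$, so it is unclear how you land on exactly $\tfrac12\Sigma_f$ in $\theta_{k+1}$. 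The paper avoids this by working with \emph{function values}: combining \eqref{F1} and \eqref{F2} gives
\[
f(\bar x)-f(x^{k+1})-\langle \bar x-x^{k+1},\nabla f(x^k)\rangle \;\ge\; \tfrac12\|x^k_e\|_{\Sigma_f}^2-\tfrac12\|x^{k+1}-x^k\|_{\widehat\Sigma_f}^2,
\]
and adding \eqref{F1} at $(x^{k+1},\bar x)$ yields a second term $\tfrac12\|x^{k+1}_e\|_{\Sigma_f}^2$. The pair $\tfrac12\|x^k_e\|_{\Sigma_f}^2+\tfrac12\|x^{k+1}_e\|_{\Sigma_f}^2$ is then collapsed to $\tfrac14\|x^{k+1}-x^k\|_{\Sigma_f}^2$ using \eqref{iec}, which after the global factor of $2$ gives precisely the $\tfrac12\Sigma_f$ weight in $\theta_{k+1}$. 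That use of \eqref{iec} on the two anchored terms is the missing ingredient in your outline; once you replace the Cauchy--Schwarz step by this argument (and its $g$-analogue), the rest of your bookkeeping goes through exactly as you describe.
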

\begin{proof}
On the one hand, setting $x:=x^{k+1}$ and $x':=x^k$ in (\ref{F2}), we get
$$
f(x^{k+1})\leq f(x^k)+\langle x^{k+1}-x^k,\nabla f(x^k) \rangle+
\frac{1}{2}\|x^{k+1}-x^k\|_{\widehat{\Sigma}_f}^2.
$$
On the other hand, setting $x':=x^k$ in (\ref{F1}), we get
$$
 f(x)\geq f(x^k)+\langle x-x^k,\nabla f(x^k) \rangle+\frac{1}{2}\|x^k-x\|_{\Sigma_f}^2,\quad \forall x\in\X.
$$
Combining the above two inequalites, we have
\begin{equation}\label{F}
f(x)-f(x^{k+1})-\langle x-x^{k+1},\nabla f(x^k) \rangle \geq
\frac{1}{2}\|x^k-x\|_{\Sigma_f}^2-\frac{1}{2}\|x^{k+1}-x^k\|_{\widehat{\Sigma}_f}^2, \qquad \forall x\in \X.
\end{equation}
From the first-order optimality condition of the $x$-subproblem in (\ref{mgadmm}), we obtain
\begin{equation}\label{fir}
\begin{aligned}
p(x)-p(x^{k+1})+\langle x-x^{k+1},\nabla f(x^k)+\A[z^k+\sigma(\A^*x^{k+1}+\B^*y^k-c)]
 +(\small{\widehat{\Sigma}_f}+\S)(x^{k+1}-x^k) \rangle\geq 0.\\[2mm]
\end{aligned}
\end{equation}
Adding both sides of (\ref{F}) and (\ref{fir}), it yields
\begin{equation}\label{FF}
\begin{aligned}
&\big(p(x)+f(x)\big)-\big(p(x^{k+1})+f(x^{k+1})\big)+\langle x-x^{k+1},\A[z^{k}+\sigma(\A^{*}x^{k+1}+\B^{*}y^{k}-c)]
+(\small{\widehat{\Sigma}_f}+\S)(x^{k+1}-x^k) \rangle \\[2mm]
\geq& \frac{1}{2}\|x^k-x\|_{\Sigma_f}^2-\frac{1}{2}\|x^{k+1}-x^k\|_{\widehat{\Sigma}_f}^2.
\end{aligned}
\end{equation}
Similarly, for any $y\in \Y $, we have
\begin{equation}\label{GG}
\begin{aligned}
&\big(q(y)+g(y)\big)-\big(q(y^{k+1})+g(y^{k+1})\big)\\[2mm]
&\quad +\langle y-y^{k+1},\mathcal{B}[z^{k}+\sigma\rho(\A^{*}x^{k+1}+\B^{*}y^{k+1}-c)
+\sigma(1-\rho)\B^{*}(y^{k+1}-y^k)]
+(\small{\widehat{\Sigma}_g}+\T)(y^{k+1}-y^k) \rangle \\[2mm]
&\geq \frac{1}{2}\|y^k-y\|_{\Sigma_g}^2-\frac{1}{2}\|y^{k+1}-y^k\|_{\widehat{\Sigma}_g}^2.
\end{aligned}
\end{equation}
Note that the third implementation in (\ref{mgadmm}) obeys the following equivalent form
$$
z^k+\sigma(\A^*x^{k+1}+\B^*y^k-c)
=z^{k+1}+\sigma(1-\rho)(\A^*x^{k+1}+\B^*y^k-c)+\sigma\B^*(y^k-y^{k+1}).
$$
Adding both sides of (\ref{FF}) and (\ref{GG}), and setting $x=\bar{x}$ and $y=\bar{y}$, we get
\begin{equation}\label{FG}
\begin{aligned}
&\Big(p(\bar{x})+f(\bar{x})+q(\bar{y})+g(\bar{y})\Big)-\Big(p(x^{k+1})+f(x^{k+1})+q(y^{k+1})+g(y^{k+1})\Big)\\[2mm]
&\quad -\Big\langle x^{k+1}_{e},\A z^{k+1}+\sigma(1-\rho)\A(\A^*x^{k+1}+\B^*y^k-c)+\sigma\A\B^*(y^k-y^{k+1})
+(\small{\widehat{\Sigma}_f}+\S)(x^{k+1}-x^k) \Big\rangle \\[2mm]
&\quad -\Big\langle y^{k+1}_{e},\B z^{k+1}+(\small{\widehat{\Sigma}_g}+\T)(y^{k+1}-y^k) \Big\rangle \\[2mm]
\geq& \frac{1}{2}\|x^k_{e}\|_{\Sigma_f}^2-\frac{1}{2}\|x^{k+1}-x^k\|_{\widehat{\Sigma}_f}^2
+\frac{1}{2}\|y^k_{e}\|_{\Sigma_g}^2-\frac{1}{2}\|y^{k+1}-y^k\|_{\widehat{\Sigma}_g}^2.
\end{aligned}
\end{equation}
Setting $x:=x^{k+1}$, $x':=\bar{x}$ in (\ref{F1}) and $y:=y^{k+1}$, $y':=\bar{y}$ in (\ref{G1}), we obtain
\begin{equation}\label{F3}
f(x^{k+1})- f(\bar{x})-\langle x^{k+1}_{e},\nabla f(\bar{x}) \rangle \geq \frac{1}{2}\|x^{k+1}_{e}\|_{\Sigma_f}^2,
\end{equation}
\begin{equation}\label{G3}
g(y^{k+1})- g(\bar{y})-\langle y^{k+1}_{e},\nabla g(\bar{y}) \rangle \geq \frac{1}{2}\|y^{k+1}_{e}\|_{\Sigma_g}^2.
\end{equation}
Adding both sides of (\ref{FG}), (\ref{F3}) and (\ref{G3}), and using the elementary inequality of (\ref{iec}), we have
\begin{equation}\label{FGG}
\begin{aligned}
&\Big(p(\bar{x})+q(\bar{y})\Big)-\Big(p(x^{k+1})+q(y^{k+1})\Big)\\[2mm]
&\quad -\Big\langle x^{k+1}_{e},\nabla f(\bar{x})+\A\bar{z}\Big\rangle-\Big\langle y^{k+1}_{e},\nabla g(\bar{y})+\B\bar{z}\Big\rangle+\Xi^{k}\\[2mm]
&\quad -\Big\langle x^{k+1}_{e},(\small{\widehat{\Sigma}_f}+\S)(x^{k+1}-x^k) \Big\rangle
-\Big\langle y^{k+1}_{e},(\small{\widehat{\Sigma}_g}+\T)(y^{k+1}-y^k) \Big\rangle \\[2mm]
\geq& \frac{1}{2}\|x^k_{e}\|_{\Sigma_f}^2+\frac{1}{2}\|x^{k+1}_{e}\|_{\Sigma_f}^2
+\frac{1}{2}\|y^k_{e}\|_{\Sigma_g}^2+\frac{1}{2}\|y^{k+1}_{e}\|_{\Sigma_g}^2 -\frac{1}{2}\|x^{k+1}-x^k\|_{\widehat{\Sigma}_f}^2-\frac{1}{2}\|y^{k+1}-y^k\|_{\widehat{\Sigma}_g}^2\\[2mm]
\geq& \frac{1}{4}\|x^{k+1}-x^k\|_{\Sigma_f}^2+\frac{1}{4}\|y^{k+1}-y^k\|_{\Sigma_g}^2
-\frac{1}{2}\|x^{k+1}-x^k\|_{\widehat{\Sigma}_f}^2-\frac{1}{2}\|y^{k+1}-y^k\|_{\widehat{\Sigma}_g}^2,
\end{aligned}
\end{equation}
where
$$
\Xi^k:=-\Big\langle x^{k+1}_{e},\A z^{k+1}_{e}+\sigma(1-\rho)\A(\A^*x^{k+1}+\B^*y^k-c)
+\sigma\A\B^{*}(y^{k}-y^{k+1})\Big\rangle-\Big\langle y^{k+1}_{e},\B z^{k+1}_{e} \Big\rangle.
$$
Setting $x:=x^{k+1}$ in the variational inequality (\ref{VA}), we have
\begin{equation}\label{VAR}
\begin{aligned}
&\Big(p(x^{k+1})+q(y^{k+1})\Big)-\Big(p(\bar{x})+q(\bar{y})\Big)+\Big\langle x^{k+1}_{e},\nabla f(\bar{x})+\mathcal{A}\bar{z} \Big\rangle +\Big \langle y^{k+1}_{e},\nabla g(\bar{y})+\mathcal{B}\bar{z} \Big\rangle \geq 0.
\end{aligned}
\end{equation}
Adding both sides of (\ref{FGG}) and (\ref{VAR}), we get
\begin{equation}\label{Q}
\begin{aligned}
&\Xi^k-\Big\langle x^{k+1}_{e},(\widehat{\Sigma}_f+\S)(x^{k+1}-x^k) \Big\rangle
-\Big\langle y^{k+1}_{e},(\widehat{\Sigma}_g+\T)(y^{k+1}-y^k)\Big \rangle\\[2mm]
\geq& \frac{1}{4}\|x^{k+1}-x^k\|_{\Sigma_f}^2+\frac{1}{4}\|y^{k+1}-y^k\|_{\Sigma_g}^2
-\frac{1}{2}\|x^{k+1}-x^k\|_{\widehat{\Sigma}_f}^2-\frac{1}{2}\|y^{k+1}-y^k\|_{\widehat{\Sigma}_g}^2.
\end{aligned}
\end{equation}
We note that the $\Xi^k$ can be reformulated as
\begin{equation}\label{QQ}
\begin{aligned}
\Xi^k&=-\Big\langle\A^*x^{k+1}_{e},z^{k+1}_{e}\Big\rangle
-\Big\langle\A^*x^{k+1}_{e},\sigma(1-\rho)(\A^*x^{k+1}+\B^*y^k-c)+\sigma\B^*(y^k-y^{k+1})\Big\rangle
-\Big\langle \B^*y^{k+1}_{e},z^{k+1}_{e}\Big\rangle\\[2mm]
&=-\Big\langle\A^*x^{k+1}_{e}+\B^*y^{k+1}_{e},z^{k+1}_{e}\Big\rangle
+\Big\langle\A^*x^{k+1}_{e},\sigma(\rho-1)(\A^*x^{k+1}+\B^*y^k-c)+\sigma\B^*(y^{k+1}-y^k)\Big\rangle,
\end{aligned}
\end{equation}
where
$$
\begin{aligned}
&\Big\langle\A^*x^{k+1}_{e}+\B^*y^{k+1}_{e},z^{k+1}_{e}\Big\rangle\\[2mm]
=&\Big\langle\A^*x^{k+1}_{e}+\B^*y^{k+1}_{e},z^{k+1}_{e}+\sigma(\rho-1)\B^*y^{k+1}_{e}\Big\rangle
-\sigma(\rho-1)\Big\langle\A^*x^{k+1}_{e}+\B^*y^{k+1}_{e},\B^*y^{k+1}_{e}\Big\rangle.
\end{aligned}
$$
Using the first identity in (\ref{id2}), we have
\begin{equation}\label{QQ1}
\begin{aligned}
&\sigma(\rho-1)\langle\A^*x^{k+1}_{e}+\B^*y^{k+1}_{e},\B^*y^{k+1}_{e}\rangle\\[2mm]
=&\frac{\sigma(\rho-1)}{2}\Big(\|\A^*x^{k+1}_{e}+\B^*y^{k+1}_{e}\|^2+\|\B^*y^{k+1}_{e}\|^2-\|\A^*x^{k+1}_{e}\|^2\Big).
\end{aligned}
\end{equation}
Substituing (\ref{l3}), (\ref{l4}) and (\ref{QQ1}) into (\ref{QQ}), we know that
\begin{equation}\label{QQQ}
\begin{aligned}
\Xi^k&=\frac{1}{2\sigma\rho}\big(\|z^k_{e}+\sigma(\rho-1)\B^*y^k_{e}\|^2
-\|z^{k+1}_{e}+\sigma(\rho-1)\B^*y^{k+1}_{e}\|^2\big)\\[2mm]
&\quad+\frac{\sigma(2-\rho)}{2}\big(\|\B^*y^k_{e}\|^2-\|\B^*y^{k+1}_{e}\|^2\big)
-\frac{\sigma(2-\rho)}{2}\|\A^*x^{k+1}_{e}+\B^*y^k_{e}\|^2.
\end{aligned}
\end{equation}
Applying the second identity in (\ref{id2}), we get
\begin{equation}\label{QC1}
\begin{aligned}
&\langle x^{k+1}_{e},(\small{\widehat{\Sigma}_f}+\S)(x^{k+1}-x^k) \rangle
=\frac{1}{2}\big(\|x^{k+1}_{e}\|_{\widehat{\Sigma}_f+\S}^2-\|x^k_{e}\|_{\widehat{\Sigma}_f+\S}^2
+\|x^{k+1}-x^k\|_{\widehat{\Sigma}_f+\S}^2\big),
\end{aligned}
\end{equation}
and
\begin{equation}\label{QC2}
\begin{aligned}
&\langle y^{k+1}_{e},(\small{\widehat{\Sigma}_g}+\T)(y^{k+1}-y^k) \rangle
=\frac{1}{2}\big(\|y^{k+1}_{e}\|_{\widehat{\Sigma}_g+\T}^2-\|y^k_{e}\|_{\widehat{\Sigma}_g+\T}^2
+\|y^{k+1}-y^k\|_{\widehat{\Sigma}_g+\T}^2\big).
\end{aligned}
\end{equation}
Substituing (\ref{QQQ}), (\ref{QC1}) and (\ref{QC2}) into (\ref{Q}), we obtain
\begin{equation}\label{O}
\begin{aligned}
&\frac{1}{2\sigma\rho}\big(\|z^k_{e}+\sigma(\rho-1)\B^*y^k_{e}\|^2
-\|z^{k+1}_{e}+\sigma(\rho-1)\B^*y^{k+1}_{e}\|^2\big)\\[2mm]
&\quad+\frac{\sigma(2-\rho)}{2}\big(\|\B^*y^k_{e}\|^2-\|\B^*y^{k+1}_{e}\|^2\big)
-\frac{\sigma(2-\rho)}{2}\|\A^*x^{k+1}_{e}+\B^*y^k_{e}\|^2\\[2mm]
&\quad+\frac{1}{2}\big(\|x^k_{e}\|_{\widehat{\Sigma}_f+\S}^2-\|x^{k+1}_{e}\|_{\widehat{\Sigma}_f+\S}^2
-\|x^{k+1}-x^k\|_{\widehat{\Sigma}_f+\S}^2\big)\\[2mm]
&\quad +\frac{1}{2}\big(\|y^k_{e}\|_{\widehat{\Sigma}_g+\T}^2-\|y^{k+1}_{e}\|_{\widehat{\Sigma}_g+\T}^2
-\|y^{k+1}-y^k\|_{\widehat{\Sigma}_g+\T}^2\big)\\[2mm]
\geq &\frac{1}{4}\|x^{k+1}-x^k\|_{\Sigma_f}^2+\frac{1}{4}\|y^{k+1}-y^k\|_{\Sigma_g}^2
-\frac{1}{2}\|x^{k+1}-x^k\|_{\widehat{\Sigma}_f}^2-\frac{1}{2}\|y^{k+1}-y^k\|_{\widehat{\Sigma}_g}^2.
\end{aligned}
\end{equation}
Using the notations in (\ref{f}) and (\ref{t}), we can get (\ref{in}) from (\ref{O}) immediately.
\end{proof}

\section{Convergence analysis}\label{section4}
%%%%%%%%%%%%%%%%%%%%%%%%%%%%%%%%%%%%%%%%%%%
In this section, we establish the convergence of algorithm MGADMM for solving problem (\ref{pro}).
For this purpose, we first give the following result.
\begin{lemma}\label{M}
Assume that $\frac{1}{2}\widehat{\Sigma}_g+\T\succeq0$. Let $\{(x^k, y^k, z^k )\}$ be the sequence generated by algorithm MGADMM. For any $\rho\in(0,2)$ and $k\geq1$, we have
\begin{equation}\label{MG}
\begin{aligned}
&\sigma(2-\rho)\|\A^*x^{k+1}+\B^*y^k-c\|^2\\[2mm]
\geq& \ \sigma(2-\rho)\|\A^*x^{k+1}+\B^*y^{k+1}-c\|^2+\frac{2-\rho}{\rho}\big(\zeta_{k+1}-\zeta_k\big)
+\frac{\sigma(2-\rho)^2}{\rho}\|\B^*(y^{k+1}-y^k)\|^2,
\end{aligned}
\end{equation}
where $\zeta_k$ is defined in (\ref{z}).
\end{lemma}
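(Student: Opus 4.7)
The plan is to reduce the stated inequality (\ref{MG}) to a single scalar inequality involving only $y$-increments and $\B(z^{k+1}-z^k)$, and then to establish that reduced inequality using the first-order optimality of the $y$-subproblem together with the second-order mean value theorem applied to $g$.

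For the reduction, I would expand the two feasibility-gap squared norms via $\|a\|^2-\|b\|^2=\|a-b\|^2+2\langle b,a-b\rangle$ with $a:=\A^*x^{k+1}+\B^*y^k-c$ and $b:=\A^*x^{k+1}+\B^*y^{k+1}-c$, so that $a-b=\B^*(y^k-y^{k+1})$. The inner product $\langle b, a-b\rangle$ is then re-expressed using the identity $\sigma\rho\, b=z^{k+1}-z^k+\sigma(\rho-1)\B^*(y^{k+1}-y^k)$ already derived in Lemma~\ref{lem3} (equation (\ref{zz22})). After multiplying through by $\sigma(2-\rho)$ and collecting terms, the $\|\B^*(y^{k+1}-y^k)\|^2$ contributions on the two sides of (\ref{MG}) cancel exactly, and the whole claim collapses to the single key inequality
\[
2\langle \B(z^{k+1}-z^k),\, y^k-y^{k+1}\rangle \ \geq\ \zeta_{k+1}-\zeta_k.
\]

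The second step is to extract a clean first-order optimality condition from the $y$-update in (\ref{mgadmm}). Substituting $\Q=\widehat{\Sigma}_g+\T+\sigma\B\B^*$ and then recombining $\sigma\rho\,\B(\A^*x^{k+1}+\B^*y^k-c)+\sigma\B\B^*(y^{k+1}-y^k)$ into $\B(z^{k+1}-z^k)$ via the $z$-update, the optimality condition simplifies to
\[
0 \in \partial q(y^{k+1}) + \nabla g(y^k) + \B z^{k+1} + (\widehat{\Sigma}_g+\T)(y^{k+1}-y^k).
\]
Writing the analogous inclusion for $y^k$ (which is legitimate since $k\geq 1$), subtracting, invoking the monotonicity of $\partial q$, and applying the elementary identity $\langle u, G(u-v)\rangle = \tfrac{1}{2}\|u\|_G^2 - \tfrac{1}{2}\|v\|_G^2 + \tfrac{1}{2}\|u-v\|_G^2$ with $G:=\widehat{\Sigma}_g+\T$, one arrives at
\[
\langle \B(z^{k+1}-z^k), y^k - y^{k+1}\rangle \geq \langle y^{k+1}-y^k, \nabla g(y^k)-\nabla g(y^{k-1})\rangle + \tfrac{1}{2}(\zeta_{k+1}-\zeta_k) + \tfrac{1}{2}\|y^{k+1}-2y^k+y^{k-1}\|_{\widehat{\Sigma}_g+\T}^2.
\]

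Comparing with the key inequality from the first paragraph, the remaining obstacle---and in my view the only analytically delicate step---is to verify that the ``leftover'' gradient term is compensated by the squared-difference term, i.e.\ that
\[
\langle y^{k+1}-y^k,\, \nabla g(y^k)-\nabla g(y^{k-1})\rangle + \tfrac{1}{2}\|y^{k+1}-2y^k+y^{k-1}\|_{\widehat{\Sigma}_g+\T}^2 \ \geq\ 0.
\]
For this I would apply the second-order mean value theorem for $g$, which supplies some $W$ in the Clarke generalized Hessian satisfying $\Sigma_g \preceq W \preceq \widehat{\Sigma}_g$ with $\nabla g(y^k)-\nabla g(y^{k-1}) = W(y^k-y^{k-1})$. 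Setting $u:=y^{k+1}-y^k$ and $v:=y^k-y^{k-1}$, the expression rewrites as $\tfrac{1}{2}\|u\|_{\widehat{\Sigma}_g+\T}^2 + \tfrac{1}{2}\|v\|_{\widehat{\Sigma}_g+\T}^2 - \langle u, Gv\rangle$ with $G:=\widehat{\Sigma}_g+\T-W\succeq\T\succeq 0$. A weighted Cauchy--Schwarz estimate $|\langle u, Gv\rangle|\leq \tfrac{1}{2}\|u\|_G^2+\tfrac{1}{2}\|v\|_G^2$ then leaves precisely $\tfrac{1}{2}\|u\|_W^2 + \tfrac{1}{2}\|v\|_W^2\geq 0$, which closes the argument. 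Back-substituting through the reduction in the first paragraph yields (\ref{MG}).
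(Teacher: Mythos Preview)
Your approach is essentially the same as the paper's: both expand $\|\A^*x^{k+1}+\B^*y^k-c\|^2$ around $\B^*y^{k+1}$, convert the cross term via (\ref{zz22}), extract the clean optimality condition $0\in\partial q(y^{k+1})+\nabla g(y^k)+\B z^{k+1}+(\widehat{\Sigma}_g+\T)(y^{k+1}-y^k)$, difference two consecutive iterations and use monotonicity of $\partial q$, and finally invoke the second-order mean value theorem to write $\nabla g(y^k)-\nabla g(y^{k-1})=W(y^k-y^{k-1})$ with $0\preceq W\preceq\widehat{\Sigma}_g$. Your organisation --- reducing everything first to the scalar key inequality $2\langle \B(z^{k+1}-z^k),\,y^k-y^{k+1}\rangle\ge\zeta_{k+1}-\zeta_k$ --- is cleaner than the paper's running computation, but the content is the same.

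There is one genuine subtlety in your last step. To apply the Cauchy--Schwarz bound $|\langle u,Gv\rangle|\le\tfrac12\|u\|_G^2+\tfrac12\|v\|_G^2$ you need $G:=\widehat{\Sigma}_g+\T-W\succeq 0$, and you justify this via $G\succeq\T\succeq 0$. But the lemma's hypothesis is only $\tfrac12\widehat{\Sigma}_g+\T\succeq 0$, and in the algorithm Step~0 $\T$ is only required to make $\Q\succ 0$; $\T$ itself need not be positive semidefinite. Since $W$ can equal $\widehat{\Sigma}_g$, your $G$ can equal $\T$, and your Cauchy--Schwarz may fail. The quantity you want to bound below by zero, namely $\langle u,Wv\rangle+\tfrac12\|u-v\|_{\widehat{\Sigma}_g+\T}^2$, \emph{is} nonnegative under the stated hypothesis, but via a different route: write $\langle u,Wv\rangle=\tfrac14\|u+v\|_W^2-\tfrac14\|u-v\|_W^2$, so the expression equals $\tfrac12\|u-v\|^2_{\widehat{\Sigma}_g+\T-\frac12 W}+\tfrac14\|u+v\|_W^2$, and now $\widehat{\Sigma}_g+\T-\tfrac12 W\succeq \tfrac12\widehat{\Sigma}_g+\T\succeq 0$ by the hypothesis. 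This is exactly the two-step estimate the paper performs in (\ref{ME}): it replaces $\widehat{\Sigma}_g+\T-W$ by $\widehat{\Sigma}_g+\T-\tfrac12 W$ (using $W\succeq 0$) before applying the convexity inequality (\ref{iec}), and that is precisely where the assumption $\tfrac12\widehat{\Sigma}_g+\T\succeq 0$ is consumed. With this adjustment your argument goes through verbatim.
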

\begin{proof}
Note that
\begin{equation}\label{MA}
\begin{aligned}
&\sigma(2-\rho)\|\A^*x^{k+1}+\B^*y^k-c\|^2\\[2mm]
=&\sigma(2-\rho)\|\A^*x^{k+1}+\B^*y^{k+1}-c+\B^*(y^k-y^{k+1})\|^2\\[2mm]
=&\sigma(2-\rho)\|\A^*x^{k+1}+\B^*y^{k+1}-c\|^2+\sigma(2-\rho)\|\B^*(y^{k+1}-y^k)\|^2\\[2mm]
&\quad+2\sigma(2-\rho)\langle \B^*(y^k-y^{k+1}), \A^*x^{k+1}+\B^*y^{k+1}-c\rangle.
\end{aligned}
\end{equation}
By (\ref{zz}), we have
\begin{align}
&2\sigma(2-\rho)\langle \B^*(y^k-y^{k+1}), \A^*x^{k+1}+\B^*y^{k+1}-c\rangle\nonumber\\[1mm]
=&2\frac{(2-\rho)}{\rho}\langle \B^*(y^k-y^{k+1}), z^{k+1}-z^k+\sigma(\rho-1)\B^*(y^{k+1}-y^k)\rangle\nonumber\\[1mm]
=&2\frac{(2-\rho)}{\rho}\langle \B^*(y^k-y^{k+1}), z^{k+1}-z^k\rangle
-2\frac{\sigma(\rho-1)(2-\rho)}{\rho}\|\B^*(y^{k+1}-y^k)\|^2.\label{MAA}
\end{align}
The first-order optimality condition of the $y$-subproblem in (\ref{mgadmm}) shows that
\begin{equation}\label{FO}
\begin{aligned}
&-\nabla g(y^k)-\B z^{k+1}-(\widehat{\Sigma}_g+\T)(y^{k+1}-y^k)\in \partial q(y^{k+1}),\\[2mm]
&-\nabla g(y^{k-1})-\B z^k-(\widehat{\Sigma}_g+\T)(y^k-y^{k-1})\in \partial q(y^k).
\end{aligned}
\end{equation}
Since $\nabla g$ is assumed to be globally Lipschitz continuous, it is known from Clarke¡¯s Mean-Value Theorem \cite[Propositio 2.6.5]{CLA} that, there exists a self-adjoint and positive semidefinite linear operator $W^k\in\partial^2g(\omega^k)$ for a certain $\omega^k\in[y^{k-1},y^k]$ such that
\begin{equation}\label{MV}
\begin{aligned}
\nabla g(y^k)-\nabla g(y^{k-1})=W^k(y^k-y^{k-1}).
\end{aligned}
\end{equation}
Using the maximal monotonicity of $\partial q(\cdot)$ and (\ref{FO}), we get
$$
\Big\langle y^k-y^{k+1},\Big[-\nabla g(y^{k-1})-\B z^k-(\widehat{\Sigma}_g+\T)(y^k-y^{k-1})\Big]-\Big[-\nabla g(y^k)-\B z^{k+1}-(\widehat{\Sigma}_g+\T)(y^{k+1}-y^k)\Big]\Big\rangle \geq 0,
$$
which, together with (\ref{MV}), implies that
\begin{equation}\label{MAAA}
\begin{aligned}
&2\frac{(2-\rho)}{\rho}\Big\langle \B^*(y^k-y^{k+1}), z^{k+1}-z^k\Big\rangle
=2\frac{(2-\rho)}{\rho}\Big\langle y^k-y^{k+1}, \B(z^{k+1}-z^k)\Big\rangle\\[2mm]
\geq& 2\frac{(2-\rho)}{\rho}\Big\langle y^{k+1}-y^k,\nabla g(y^k)- \nabla g(y^{k-1}) \Big\rangle
-2\frac{(2-\rho)}{\rho}\Big\langle y^{k+1}-y^k,(\widehat{\Sigma}_g+\T)(y^k-y^{k-1}) \Big\rangle\\[2mm]
&\quad+2\frac{(2-\rho)}{\rho}\|y^{k+1}-y^k\|^2_{\widehat{\Sigma}_g+\T}\\[2mm]
=&2\frac{(2-\rho)}{\rho}\Big\langle y^{k+1}-y^k, W^k(y^k-y^{k-1}) \Big\rangle
-2\frac{(2-\rho)}{\rho}\Big\langle y^{k+1}-y^k,(\widehat{\Sigma}_g+\T)(y^k-y^{k-1}) \Big\rangle\\[2mm]
&\quad+2\frac{(2-\rho)}{\rho}\|y^{k+1}-y^k\|^2_{\widehat{\Sigma}_g+\T}\\[2mm]
=&2\frac{(2-\rho)}{\rho}\Big\langle y^{k+1}-y^k, (\widehat{\Sigma}_g+\T-W^k)(y^{k-1}-y^k)\Big\rangle
+2\frac{(2-\rho)}{\rho}\|y^{k+1}-y^k\|^2_{\widehat{\Sigma}_g+\T}.
\end{aligned}
\end{equation}
Applying the first identity in (\ref{id2}) and noting that $W^k\succeq 0$, we have
\begin{align}
&2\frac{(2-\rho)}{\rho}\big\langle y^{k+1}-y^k, (\widehat{\Sigma}_g+\T-W^k)(y^{k-1}-y^k)\big\rangle\nonumber\\[2mm]
=&\frac{2-\rho}{\rho}\|y^{k+1}-y^k\|^2_{\widehat{\Sigma}_g+\T-W^k}+\frac{2-\rho}{\rho}\|y^k-y^{k-1}\|^2_{\widehat{\Sigma}_g+\T-W^k}
-\frac{2-\rho}{\rho}\|y^{k+1}-y^{k-1}\|^2_{\widehat{\Sigma}_g+\T-W^k}\nonumber\\[2mm]
\geq&\frac{2-\rho}{\rho}\|y^{k+1}-y^k\|^2_{\widehat{\Sigma}_g+\T-W^k}+\frac{2-\rho}{\rho}\|y^k-y^{k-1}\|^2_{\widehat{\Sigma}_g+\T-W^k}
-\frac{2-\rho}{\rho}\|y^{k+1}-y^{k-1}\|^2_{\widehat{\Sigma}_g+\T-\frac{1}{2}W^k}\nonumber\\[2mm]
\geq&\frac{2-\rho}{\rho}\|y^{k+1}-y^k\|^2_{\widehat{\Sigma}_g+\T-W^k}
+\frac{2-\rho}{\rho}\|y^k-y^{k-1}\|^2_{\widehat{\Sigma}_g+\T-W^k}\nonumber\\[2mm]
&\quad-2\frac{(2-\rho)}{\rho}\|y^{k+1}-y^k\|^2_{\widehat{\Sigma}_g+\T-\frac{1}{2}W^k}
-2\frac{(2-\rho)}{\rho}\|y^k-y^{k-1}\|^2_{\widehat{\Sigma}_g+\T-\frac{1}{2}W^k}\nonumber\\[2mm]
=&-\frac{2-\rho}{\rho}\|y^{k+1}-y^k\|^2_{\widehat{\Sigma}_g+\T}
-\frac{2-\rho}{\rho}\|y^k-y^{k-1}\|^2_{\widehat{\Sigma}_g+\T},\label{ME}
\end{align}
where the second inequality is obtained from (\ref{iec}) and the fact (\ref{GG}) and
$$
\widehat{\Sigma}_g+\T-\frac{1}{2}W^k=\frac{1}{2}\widehat{\Sigma}_g+\T+\frac{1}{2}(\widehat{\Sigma}_g-W^k)
\succeq \frac{1}{2}\widehat{\Sigma}_g+\T \succeq 0.
$$
Substituting the inequality (\ref{ME}) into (\ref{MAAA}), we can estimate the term
\begin{equation}\label{MEE}
\begin{aligned}
&2\frac{(2-\rho)}{\rho}\langle \B^*(y^k-y^{k+1}), z^{k+1}-z^k\rangle\\[2mm]
\geq&-\frac{2-\rho}{\rho}\|y^{k+1}-y^k\|^2_{\widehat{\Sigma}_g+\T}
-\frac{2-\rho}{\rho}\|y^k-y^{k-1}\|^2_{\widehat{\Sigma}_g+\T}
+2\frac{(2-\rho)}{\rho}\|y^{k+1}-y^k\|^2_{\widehat{\Sigma}_g+\T}\\[2mm]
=&\frac{2-\rho}{\rho}\|y^{k+1}-y^k\|^2_{\widehat{\Sigma}_g+\T}
-\frac{2-\rho}{\rho}\|y^k-y^{k-1}\|^2_{\widehat{\Sigma}_g+\T},
\end{aligned}
\end{equation}
which, combining with (\ref{MA}), (\ref{MAA}) and the definition of $\zeta_k$, implies that (\ref{MG}) holds.
\end{proof}

Now we are ready to establish the global convergence of the algorithm MGADMM under some appropriate conditions.
\begin{theorem}\label{MG}
Let $\{(x^k, y^k, z^k )\}$ be the sequence generated by algorithm MGADMM and $(\bar{x},\bar{y},\bar{z})\in \W^*$. Let $\M_f$ and $\N_g$ be defined by (\ref{MN}). For each $k$, let $\phi_k$, $\theta_k$, $\vartheta_k$ and $\zeta_k$ be defined in (\ref{f}), (\ref{t}), (\ref{v}) and (\ref{z}), respectively. Then the following conclusions hold:\\
({\bf I}) For any $\rho\in(0,2)$ and $\beta\in(0,1/2)$, we have
\begin{equation}\label{i}
\begin{aligned}
&\Big(\phi_k(\bar{x},\bar{y},\bar{z})+\sigma\mu(2-\rho)\|\A^*x^k+\B^*y^k-c\|^2\Big)\\[2mm]
&\qquad\qquad-\Big(\phi_{k+1}(\bar{x},\bar{y},\bar{z})+\sigma\mu(2-\rho)\|\A^*x^{k+1}+\B^*y^{k+1}-c\|^2\Big)\\[2mm]
\geq& \Big(\frac{1}{\sigma\rho^2}\|z^{k+1}-z^k+\sigma(\rho-1)\B^*(y^{k+1}-y^k)\|^2\\[2mm]
&\qquad\qquad+\|x^{k+1}-x^k\|^2_{\widehat{\Sigma}_f+\S+\sigma\beta(2-\rho)\A\A^*}
+\|y^{k+1}-y^k\|^2_{\widehat{\Sigma}_g+\T+\sigma\beta(2-\rho)\B\B^*}\Big)\\[2mm]
&-\Big(\frac{1+2\mu-\mu\rho}{\sigma\rho^2}\|z^{k+1}-z^k+\sigma(\rho-1)\B^*(y^{k+1}-y^k)\|^2\\[2mm]
&\qquad\qquad+\|x^{k+1}-x^k\|^2_{\widehat{\Sigma}_f-\frac{1}{2}\Sigma_f}
+\|y^{k+1}-y^k\|^2_{\widehat{\Sigma}_g-\frac{1}{2}\Sigma_g+\sigma\beta(2-\rho)\B\B^*}\Big),
\end{aligned}
\end{equation}
where
\begin{equation}\label{m}
\begin{aligned}
\mu=\frac{\beta(1-\beta)}{1-2\beta}.
\end{aligned}
\end{equation}
Moreover, for some $\rho\in(0,2)$ and $\beta\in(0,1/2)$, we assume that
\begin{equation}\label{ias}
\begin{aligned}
\widehat{\Sigma}_f+\S+\sigma\beta(2-\rho)\A\A^* \succ 0  \qquad and \qquad \widehat{\Sigma}_g+\T+\sigma\beta(2-\rho)\B\B^* \succ 0,
\end{aligned}
\end{equation}
and that
\begin{equation}\label{ico}
\begin{aligned}
\sum\limits_{k=0}^{\infty}\Big(\|x^{k+1}-x^k\|^2_{\widehat{\Sigma}_f}
+\|y^{k+1}-y^k\|^2_{\widehat{\Sigma}_g+\sigma(2-\rho)\B\B^*}+\|\A^*x^{k+1}+\B^*y^{k+1}-c\|^2\Big)<+\infty.
\end{aligned}
\end{equation}
Then the sequence $\{(x^k,y^k)\}$ converges to an optimal solution of problem (\ref{pro}) and that $\{z^k\}$ converges to an optimal solution of the dual problem (\ref{dmp}) where $\varphi(x)\equiv p(x)+f(x)$ and $\phi(y)\equiv q(y)+g(y)$.

({\bf II}) Assume that
\begin{equation}\label{sigT}
\frac{1}{2}\widehat{\Sigma}_g+\T \succeq 0.
\end{equation}
Then, for any $\rho\in(0,2)$ and $\lambda\in(0,1]$, we have
\begin{equation}\label{ii}
\begin{aligned}
&\Big(\phi_k(\bar{x},\bar{y},\bar{z})+\sigma(1-\lambda)(2-\rho)\|\A^*x^k+\B^*y^k-c\|^2
+\frac{\lambda(2-\rho)}{\rho}\zeta_k\Big)\\[2mm]
&\quad-\Big(\phi_{k+1}(\bar{x},\bar{y},\bar{z})+\sigma(1-\lambda)(2-\rho)\|\A^*x^{k+1}+\B^*y^{k+1}-c\|^2
+\frac{\lambda(2-\rho)}{\rho}\zeta_{k+1}\Big)\\[2mm]
\geq& \ \vartheta_{k+1}+\sigma(2\lambda-1)(2-\rho)\|\A^*x^{k+1}+\B^*y^{k+1}-c\|^2.
\end{aligned}
\end{equation}
Morevoer, for some $\rho\in(0,2)$ and $\lambda\in(\frac{1}{2},1]$, we assume that
\begin{equation}\label{iimn}
\widehat{\Sigma}_f+\S\succeq 0, \qquad \M_f \succeq 0 \qquad and \qquad \N_g \succ 0,
\end{equation}
\begin{equation}\label{iixy}
\frac{1}{2}\Sigma_f+\S+\sigma(2-\rho)\A\A^*\succ0 \qquad and \qquad \frac{1}{2}\Sigma_g+\T+\sigma(2-\rho)\B\B^*\succ0.
\end{equation}
Then the sequence $\{(x^k,y^k)\}$ converges to an optimal solution of problem (\ref{pro}) and $\{z^k\}$ converges to an optimal solution of the dual problem (\ref{dmp}) where $\varphi(x)\equiv p(x)+f(x)$ and $\phi(y)\equiv q(y)+g(y)$.
\end{theorem}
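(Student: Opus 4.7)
My plan is to treat Parts (I) and (II) via a common two-step template: derive a one-step descent for a Lyapunov function built from $\phi_k$, then invoke a Fejér-monotonicity argument to upgrade to full convergence to a KKT triple.

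For Part (I), I would start from the inequality (\ref{in}) of Lemma \ref{lem5} and, using $\A^{*}\bar{x}+\B^{*}\bar{y}=c$, rewrite its cross term as $\sigma(2-\rho)\|a+b\|^{2}$ with $a:=\A^{*}(x^{k+1}-x^{k})$ and $b:=\A^{*}x^{k}+\B^{*}y^{k}-c$. I would then invoke the elementary bound
\[
\|a+b\|^{2}+\mu\|b\|^{2}\geq \beta\|a\|^{2},
\]
which holds for every $a,b$ whenever $(1-\beta)(1+\mu)\geq 1$. A direct calculation shows that the choice $\mu=\beta(1-\beta)/(1-2\beta)$ in (\ref{m}) verifies this condition strictly for $\beta\in(0,1/2)$, so the cross term splits into a $\beta\|a\|^{2}$ piece that is absorbed into the $\sigma\beta(2-\rho)\A\A^{*}$ weight of the first parenthesis of (\ref{i}) and a $-\mu\|b\|^{2}$ piece that is moved to the left as $\sigma\mu(2-\rho)\|\A^{*}x^{k}+\B^{*}y^{k}-c\|^{2}$. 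Identity (\ref{zz22}) then converts $\sigma\|\A^{*}x^{k+1}+\B^{*}y^{k+1}-c\|^{2}$ into $\|z^{k+1}-z^{k}+\sigma(\rho-1)\B^{*}(y^{k+1}-y^{k})\|^{2}/(\sigma\rho^{2})$, and splitting $\theta_{k+1}$ via $\widehat{\Sigma}_{f}=(\widehat{\Sigma}_{f}-\tfrac12\Sigma_{f})+\tfrac12\Sigma_{f}$ (similarly for $g$) assembles (\ref{i}) exactly.

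For the convergence conclusion of Part (I), I would define the Lyapunov quantity $V_{k}:=\phi_{k}(\bar{x},\bar{y},\bar{z})+\sigma\mu(2-\rho)\|\A^{*}x^{k}+\B^{*}y^{k}-c\|^{2}$ and read (\ref{i}) as $V_{k}-V_{k+1}\geq \Phi_{k+1}-\Psi_{k+1}$, where $\Phi_{k+1}$ is the first parenthesis and $\Psi_{k+1}$ is the second on the right-hand side of (\ref{i}). Under (\ref{ias}) the quantity $\Phi_{k+1}$ is coercive in the successive differences $(x^{k+1}-x^{k},y^{k+1}-y^{k},z^{k+1}-z^{k})$, while (\ref{ico}) together with (\ref{zz22}) and $\widehat{\Sigma}_{f}-\tfrac12\Sigma_{f}\preceq\widehat{\Sigma}_{f}$ gives $\sum_{k}\Psi_{k+1}<\infty$. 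Telescoping then yields $\sup_{k}V_{k}<\infty$ and $\sum_{k}\Phi_{k+1}<\infty$, so $\|x^{k+1}-x^{k}\|$, $\|y^{k+1}-y^{k}\|$ and $\|z^{k+1}-z^{k}\|$ all tend to zero and $\{(x^{k},y^{k},z^{k})\}$ is bounded. Passing the first-order optimality conditions of the subproblems in (\ref{mgadmm}) to any cluster point then yields the KKT system (\ref{KKT}); Fejér monotonicity of $V_{k}$ with respect to any such point upgrades subsequential convergence to full convergence.

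For Part (II), I would combine (\ref{in}) with a $\lambda$-weighted Lemma \ref{M}: splitting $\sigma(2-\rho)\|a+b\|^{2}=(1-\lambda)\sigma(2-\rho)\|a+b\|^{2}+\lambda\sigma(2-\rho)\|\A^{*}x^{k+1}+\B^{*}y^{k}-c\|^{2}$ and applying (\ref{MG}) to the second piece produces the telescoping term $\frac{\lambda(2-\rho)}{\rho}(\zeta_{k+1}-\zeta_{k})$ and the $\sigma\lambda(2-\rho)^{2}/\rho\cdot\|\B^{*}(y^{k+1}-y^{k})\|^{2}$ contribution that completes $\N_{g}$ in (\ref{MN}). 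The remaining $(1-\lambda)$-fraction is handled by the Young-type bound $\|a+b\|^{2}\geq \tfrac12\|a\|^{2}-\|b\|^{2}$, which supplies the $\tfrac12\sigma(1-\lambda)(2-\rho)\A\A^{*}$ weight inside $\M_{f}$ and a $-\sigma(1-\lambda)(2-\rho)\|\A^{*}x^{k}+\B^{*}y^{k}-c\|^{2}$ term that is absorbed into $\sigma(1-\lambda)(2-\rho)(\|\A^{*}x^{k}+\B^{*}y^{k}-c\|^{2}-\|\A^{*}x^{k+1}+\B^{*}y^{k+1}-c\|^{2})$ on the left. The $-\sigma(1-\lambda)(2-\rho)\|\A^{*}x^{k+1}+\B^{*}y^{k+1}-c\|^{2}$ portion combines with the $\lambda\sigma(2-\rho)\|\A^{*}x^{k+1}+\B^{*}y^{k+1}-c\|^{2}$ coming from Lemma \ref{M} to give the $\sigma(2\lambda-1)(2-\rho)\|\A^{*}x^{k+1}+\B^{*}y^{k+1}-c\|^{2}$ residual on the right of (\ref{ii}). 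Convergence follows by the same Fejér template: for $\lambda\in(1/2,1]$ this residual is nonnegative, (\ref{iimn}) makes $\vartheta_{k+1}$ coercive in the successive differences, (\ref{iixy}) secures boundedness through the $\phi_{k}$ piece, and the cluster-point argument of Part (I) concludes.

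The main obstacle is the careful bookkeeping that forces every coefficient inside $\M_{f}$, $\N_{g}$ and the two parentheses of (\ref{i}) and (\ref{ii}) to fall into place after the Young splits and the substitution of Lemma \ref{M}. In particular, Lemma \ref{M} rests on the Clarke mean-value identity (\ref{MV}) and hence demands the hypothesis $\tfrac12\widehat{\Sigma}_{g}+\T\succeq 0$ in (\ref{sigT}); any weaker version of that bound would invalidate (\ref{ME}) and destroy the monotonicity of the Lyapunov function used in Part (II).
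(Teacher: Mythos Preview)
Your proposal is correct and follows essentially the same route as the paper: both parts start from Lemma~\ref{lem5}, apply a Young/Cauchy--Schwarz split to $\sigma(2-\rho)\|\A^{*}x^{k+1}_{e}+\B^{*}y^{k}_{e}\|^{2}$ (with parameter $\mu$ in (I) and a $\lambda$-weighted combination with Lemma~\ref{M} in (II)), convert residuals via \eqref{zz22}, and conclude by the Fej\'er/cluster-point argument you describe. The only step you leave implicit is the lower bound of type \eqref{ite}, namely that $V_{k}\ge\|x^{k}_{e}\|^{2}_{\widehat{\Sigma}_{f}+\S+\sigma\beta(2-\rho)\A\A^{*}}+\|y^{k}_{e}\|^{2}_{\widehat{\Sigma}_{g}+\T+\sigma\beta(2-\rho)\B\B^{*}}+(\sigma\rho)^{-1}\|z^{k}_{e}+\sigma(\rho-1)\B^{*}y^{k}_{e}\|^{2}$, obtained by one more Cauchy--Schwarz on the constraint-residual term; this is what turns ``$\sup_{k}V_{k}<\infty$'' into boundedness of $\{(x^{k},y^{k},z^{k})\}$ under \eqref{ias}, and similarly in Part~(II).
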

\begin{proof}
({\bf I})
For a given $\mu>0$ defined in (\ref{m}), we have
\begin{align}
&\sigma(2-\rho)\|\A^*x^{k+1}+\B^*y^k-c\|^2\nonumber\\[2mm]
=&\sigma(2-\rho)\|\A^*x^k+\B^*y^k-c+\A^*(x^{k+1}-x^k)\|^2\nonumber\\[2mm]
=&\sigma(2-\rho)\|\A^*x^k+\B^*y^k-c\|^2+\sigma(2-\rho)\|\A^*(x^{k+1}-x^k)\|^2\nonumber\\[2mm]
&\quad+2\sigma(2-\rho)\langle \A^*(x^{k+1}-x^k), \A^*x^k+\B^*y^k-c\rangle\nonumber\\[2mm]
\geq&-\sigma\mu(2-\rho)\|\A^*x^k+\B^*y^k-c\|^2+\frac{\sigma\mu(2-\rho)}{1+\mu}\|\A^*(x^{k+1}-x^k)\|^2,\label{ip}
\end{align}
where the last inequality is from Cauchy-Schwarz inequality, that is
$$
\begin{aligned}
&2\sigma(2-\rho)\langle \A^*(x^{k+1}-x^k), \A^*x^k+\B^*y^k-c\rangle\\[2mm]
\geq& -\sigma(1+\mu)(2-\rho)\|\A^*x^k+\B^*y^k-c\|^2-\frac{\sigma(2-\rho)}{1+\mu}\|\A^*(x^{k+1}-x^k)\|^2.
\end{aligned}
$$
Recalling the definition of $\theta_{k}$ in (\ref{t}) and using the inequality of (\ref{ip}), we get
\begin{equation}\label{it}
\begin{aligned}
&\theta_{k+1}+\sigma(2-\rho)\|\A^*x^{k+1}+\B^*y^k-c\|^2\\[2mm]
\geq&-\frac{\mu(2-\rho)}{\sigma\rho^2}\|z^{k+1}-z^k+\sigma(\rho-1)\B^*(y^{k+1}-y^k)\|^2\\[2mm]
&\quad+\|x^{k+1}-x^k\|^2_{\frac{1}{2}\Sigma_f+\S+\frac{\mu}{1+\mu}\sigma(2-\rho)\A\A^*}
+\|y^{k+1}-y^k\|^2_{\frac{1}{2}\Sigma_g+\T}\\[2mm]
&\quad+\sigma\mu(2-\rho)\big(\|\A^*x^{k+1}+\B^*y^{k+1}-c\|^2-\|\A^*x^k+\B^*y^k-c\|^2\big).
\end{aligned}
\end{equation}
Noting the fact that
$$
\begin{aligned}
\frac{\mu}{1+\mu}=\frac{\frac{\beta(1-\beta)}{1-2\beta}}{1+\frac{\beta(1-\beta)}{1-2\beta}}
=\beta\frac{1-\beta}{1-\beta-\beta^2}>\beta,
\end{aligned}
$$
and the relationship that
$$
\begin{aligned}
&\frac{1}{2}\Sigma_f+\S+\frac{\mu}{1+\mu}\sigma(2-\rho)\A\A^*\succ (\widehat{\Sigma}_f+\S+\sigma\beta(2-\rho)\A\A^*)-(\widehat{\Sigma}_f-\frac{1}{2}\Sigma_f),\\[2mm]
&\frac{1}{2}\Sigma_g+\T = (\widehat{\Sigma}_g+\T+\sigma\beta(2-\rho)\B\B^*)-(\widehat{\Sigma}_g-\frac{1}{2}\Sigma_g+\sigma\beta(2-\rho)\B\B^*),\\[2mm]
\end{aligned}
$$
we obtain
\begin{equation}\label{itt}
\begin{aligned}
&-\frac{\mu(2-\rho)}{\sigma\rho^2}\|z^{k+1}-z^k+\sigma(\rho-1)\B^*(y^{k+1}-y^k)\|^2\\[2mm]
&\qquad\qquad+\|x^{k+1}-x^k\|^2_{\frac{1}{2}\Sigma_f+\S+\frac{\mu}{1+\mu}\sigma(2-\rho)\A\A^*}
+\|y^{k+1}-y^k\|^2_{\frac{1}{2}\Sigma_g+\T}\\[2mm]
&\geq \Big(\frac{1}{\sigma\rho^2}\|z^{k+1}-z^k+\sigma(\rho-1)\B^*(y^{k+1}-y^k)\|^2\\[2mm]
&\qquad\qquad +\|x^{k+1}-x^k\|^2_{\widehat{\Sigma}_f+\S+\sigma\beta(2-\rho)\A\A^*}
+\|y^{k+1}-y^k\|^2_{\widehat{\Sigma}_g+\T+\sigma\beta(2-\rho)\B\B^*}\Big) \\[2mm]
&-\Big(\frac{1+2\mu-\mu\rho}{\sigma\rho^2}\|z^{k+1}-z^k+\sigma(\rho-1)\B^*(y^{k+1}-y^k)\|^2\\[2mm]
&\qquad\qquad +\|x^{k+1}-x^k\|^2_{\widehat{\Sigma}_f-\frac{1}{2}\Sigma_f}
+\|y^{k+1}-y^k\|^2_{\widehat{\Sigma}_g-\frac{1}{2}\Sigma_g+\sigma\beta(2-\rho)\B\B^*}\Big).
\end{aligned}
\end{equation}
Substituting (\ref{itt}) and (\ref{it}) into (\ref{in}), we get the desired inequality (\ref{i}).

We now focus on the proving the desired convergence results.
Firstly, we show that there exits a convergence subsequence in the sequence $\{x^k,y^k,z^k\}$.
For this purpose, we denote
$$
\begin{aligned}
\delta_k:=&\frac{1}{\sigma\rho^2}\|z^{k+1}-z^k+\sigma(\rho-1)\B^*(y^{k+1}-y^k)\|^2\\[2mm]
& +\|x^{k+1}-x^k\|^2_{\widehat{\Sigma}_f+\S+\sigma\beta(2-\rho)\A\A^*}
+\|y^{k+1}-y^k\|^2_{\widehat{\Sigma}_g+\T+\sigma\beta(2-\rho)\B\B^*},\\[2mm]
\end{aligned}
$$
and
$$
\begin{aligned}
\xi_k:=&\frac{1+2\mu-\mu\rho}{\sigma\rho^2}\|z^{k+1}-z^k+\sigma(\rho-1)\B^*(y^{k+1}-y^k)\|^2\\[2mm]
& +\|x^{k+1}-x^k\|^2_{\widehat{\Sigma}_f-\frac{1}{2}\Sigma_f}
+\|y^{k+1}-y^k\|^2_{\widehat{\Sigma}_g-\frac{1}{2}\Sigma_g+\sigma\beta(2-\rho)\B\B^*}.\\[2mm]
\end{aligned}
$$
Using the notations, then (\ref{i}) can be rewritten equivalently as
\begin{equation}\label{ie}
\begin{aligned}
&\Big(\phi_k(\bar{x},\bar{y},\bar{z})+\sigma\mu(2-\rho)\|\A^*x^k+\B^*y^k-c\|^2\Big)\\[2mm]
&-\Big(\phi_{k+1}(\bar{x},\bar{y},\bar{z})+\sigma\mu(2-\rho)\|\A^*x^{k+1}+\B^*y^{k+1}-c\|^2\Big)
&\geq \delta_k-\xi_k.
\end{aligned}
\end{equation}
Under the assumptions of (\ref{ias}) and (\ref{ico}) with $\rho\in(0,2)$ and $\beta\in(0,1/2)$, we have the following inequalities by the using of the definitions of $\phi_{k}$, $\mu$ and Cauchy-Schwarz inequality, that is
\begin{equation}\label{ite}
\begin{aligned}
&\phi_{k+1}(\bar{x},\bar{y},\bar{z})+\sigma\mu(2-\rho)\|\A^*x^{k+1}+\B^*y^{k+1}-c\|^2\\[3mm]
=&(\sigma\rho)^{-1}\|z^{k+1}_{e}+\sigma(\rho-1)\B^*y^{k+1}_{e}\|^2
+\|x^{k+1}_{e}\|^2_{\widehat{\Sigma}_f+\S}+\|y^{k+1}_{e}\|^2_{\widehat{\Sigma}_g+\T+\sigma(2-\rho)\B\B^*}
+\sigma\mu(2-\rho)\|\A^*x^{k+1}_{e}+\B^*y^{k+1}_{e}\|^2\\[3mm]
\geq& (\sigma\rho)^{-1}\|z^{k+1}_{e}+\sigma(\rho-1)\B^*y^{k+1}_{e}\|^2
+\|x^{k+1}_{e}\|^2_{\widehat{\Sigma}_f+\S}+\|y^{k+1}_{e}\|^2_{\widehat{\Sigma}_g+\T+\sigma(2-\rho)\B\B^*}\\[3mm]
&\quad+\mu(2-\rho)(1-\frac{\beta}{1-\beta})\|x^{k+1}_{e}\|^2_{\sigma\A\A^*}
+\mu(2-\rho)(1-\frac{1-\beta}{\beta})\|y^{k+1}_{e}\|^2_{\sigma\B\B^*}\\[2mm]
=&(\sigma\rho)^{-1}\|z^{k+1}_{e}+\sigma(\rho-1)\B^*y^{k+1}_{e}\|^2
+\|x^{k+1}_{e}\|^2_{\widehat{\Sigma}_f+\S+\sigma\beta(2-\rho)\A\A^*}
+\|y^{k+1}_{e}\|^2_{\widehat{\Sigma}_g+\T+\sigma\beta(2-\rho)\B\B^*}.
\end{aligned}
\end{equation}
For the convergence results, we firstly show that $\{(x^k,y^k,z^k)\}$ is bounded. The assumption (\ref{ico}) indicates that $\sum_{k=0}^{\infty}\xi_k<+\infty$. Note that $\delta_k\geq 0$ from assumption (\ref{ias}). It follows from (\ref{ie}) and (\ref{ite}) that
\begin{equation}\label{iq}
\begin{aligned}
0&\leq \phi_{k+1}(\bar{x},\bar{y},\bar{z})+\sigma\mu(2-\rho)\|\A^*x^{k+1}+\B^*y^{k+1}-c\|^2\\[2mm]
&\leq \phi_k(\bar{x},\bar{y},\bar{z})+\sigma\mu(2-\rho)\|\A^*x^k+\B^*y^k-c\|^2+\xi_k\\
&\leq \phi_0(\bar{x},\bar{y},\bar{z})+\sigma\mu(2-\rho)\|\A^*x^0+\B^*y^0-c\|^2+\sum_{j=0}^{k}\xi_j\leq+\infty,
\end{aligned}
\end{equation}
which shows that $\{\phi_{k+1}(\bar{x},\bar{y},\bar{z})+\sigma\mu(2-\rho)\|\A^*x^{k+1}+\B^*y^{k+1}-c\|^2\}$ is bounded, and then implies that $\{\|z^{k+1}_{e}+\sigma(\rho-1)\B^*y^{k+1}_{e}\|\}$, $\{\|x^{k+1}_{e}\|_{\widehat{\Sigma}_f+\S+\sigma\beta(2-\rho)\A\A^*}\}$ and
$\{\|y^{k+1}_{e}\|_{\widehat{\Sigma}_g+\T+\sigma\beta(2-\rho)\B\B^*}\}$ are all bounded. Since
$\widehat{\Sigma}_f+\S+\sigma\beta(2-\rho)\A\A^*\succ 0$ and $\widehat{\Sigma}_g+\T+\sigma\beta(2-\rho)\B\B^*\succ 0$,
the sequence $\{\|x^{k+1}\|\}$ and $\{\|y^{k+1}\|\}$ are bounded. The boundedness
of $\{\|z^{k+1}_{e}+\sigma(\rho-1)\B^*y^{k+1}_{e}\|\}$ and $\{\|y^{k+1}\|\}$ further indicate that the sequence $\{\|z^{k+1}\|\}$ is also bounded. The above arguments have shown that $\{(x^k,y^k,z^k)\}$ is bounded.
Consequently, the sequence $\{(x^k,y^k,z^k)\}$ exists at least one subsequence $\{(x^{k_i},y^{k_i},z^{k_i})\}$ converging to a cluster point, say $(x^\infty,y^\infty,z^\infty)$.

Secondly, we prove that $(x^\infty,y^\infty)$ is an optimal solution to problem (\ref{pro}) and $z^\infty$ is the corresponding Lagrange multiplier.
For each $k\geq 0$, the inequality (\ref{ie}) is reorganized as the following form
\begin{equation}\label{ieq}
\begin{aligned}
&\delta_k\leq\Big(\phi_k(\bar{x},\bar{y},\bar{z})+\sigma\mu(2-\rho)\|\A^*x^k+\B^*y^k-c\|^2\Big)\\[2mm]
&\qquad-\Big(\phi_{k+1}(\bar{x},\bar{y},\bar{z})+\sigma\mu(2-\rho)\|\A^*x^{k+1}+\B^*y^{k+1}-c\|^2\Big)
+\xi_k.
\end{aligned}
\end{equation}
Summing both sides of (\ref{ieq}) from $j=0$ to $k$, it yields
\begin{equation}\label{iequ}
\begin{aligned}
\sum\limits_{j=0}^{k}\delta_j \leq& \Big(\phi_0(\bar{x},\bar{y},\bar{z})+\sigma\mu(2-\rho)\|\A^*x^0+\B^*y^0-c\|^2\Big)\\[-4mm]
&-\Big(\phi_{k+1}(\bar{x},\bar{y},\bar{z})+\sigma\mu(2-\rho)\|\A^*x^{k+1}+\B^*y^{k+1}-c\|^2\Big)
+\sum_{j=0}^{k}\xi_j,
\end{aligned}
\end{equation}
which implies that $\lim\limits_{k\rightarrow\infty}\delta_k=0$ by noting that $\sum\limits_{k=0}^{\infty}\xi_k<+\infty$.
Because $\widehat{\Sigma}_f+\S+\sigma\beta(2-\rho)\A\A^*\succ 0$ and $\widehat{\Sigma}_g+\T+\sigma\beta(2-\rho)\B\B^*\succ 0$ have been assumed in (\ref{ias}), from the definition of $\delta_k$, we get
\begin{equation}\label{lz}
\begin{aligned}
\lim\limits_{k\rightarrow\infty}\sigma\rho\|\A^*x^{k+1}+\B^*y^{k+1}-c\|
=\lim\limits_{k\rightarrow\infty}\|z^{k+1}-z^k+\sigma(\rho-1)\B^*(y^{k+1}-y^k)\|=0,
\end{aligned}
\end{equation}
\begin{equation}\label{lxy}
\begin{aligned}
\lim\limits_{k\rightarrow\infty}\|x^{k+1}-x^k\|=0 \qquad \mbox{and} \qquad \lim\limits_{k\rightarrow\infty}\|y^{k+1}-y^k\|=0.
\end{aligned}
\end{equation}
Using the first-order optimality condition of the $x$- and $y$-subproblem in (\ref{mgadmm}) on the subsequence $\{(x^{k_i},y^{k_i},z^{k_i})\}$, we know that
\begin{equation}\label{fooc}
\left\{
\begin{array}{l}
0\in \partial p(x^{{k_i}+1})+(\widehat{\Sigma}_f+\S)(x^{{k_i}+1}-x^{k_i})+\nabla f(x^{k_i})+
\A[z^{k_i}+\sigma(\A^*x^{{k_i}+1}+\B^*y^{k_i}-c)],\\[2mm]
0\in \partial q(y^{{k_i}+1})+(\widehat{\Sigma}_g+\T)(y^{{k_i}+1}-y^{k_i})+\nabla g(y^{k_i})
+\B[z^{k_i}+\sigma\rho(\A^*x^{{k_i}+1}+\B^*y^{{k_i}+1}-c)\\[2mm]
\qquad +\sigma(1-\rho)\B^*(y^{{k_i}+1}-y^{k_i})].
\end{array}
\right.
\end{equation}
Taking limits on both sides of (\ref{fooc}), and using (\ref{lz}) and (\ref{lxy}), we obtain
\begin{equation}\label{iKKT}
\begin{aligned}
0\in \partial p(x^\infty)+\nabla f(x^\infty)+\A{z^\infty}, \quad 0\in \partial q(y^\infty)+\nabla g(y^\infty)+\B{z^\infty}
\quad \mbox{and} \quad \A^*x^\infty+\B^*y^\infty=c,
\end{aligned}
\end{equation}
which indicates that $(x^\infty,y^\infty)$ is an optimal solution to problem (\ref{pro}) and $z^\infty$ is the corresponding Lagrange multiplier.

Finally, we need to show that $(x^\infty,y^\infty,z^\infty)$ is the unique limit point of the sequence $\{(x^k,y^k,z^k)\}$. Without loss of generality, we can let $(\bar{x},\bar{y},\bar{z})=(x^\infty,y^\infty,z^\infty)$. Consequently, the sequence $\{\phi_{k_i}(\bar{x},\bar{y},\bar{z})+\sigma\mu(2-\rho)\|\A^*x^{k_i}+\B^*y^{k_i}-c\|^2\}$ converges to 0 as $i \rightarrow\infty$. Hence, for any $k\geq k_i$, we know from (\ref{iq}) that
\begin{equation}\label{iqq}
\begin{aligned}
&\phi_{k+1}(\bar{x},\bar{y},\bar{z})+\sigma\mu(2-\rho)\|\A^*x^{k+1}+\B^*y^{k+1}-c\|^2\\[0mm]
\leq &\phi_{k_i}(\bar{x},\bar{y},\bar{z})+\sigma\mu(2-\rho)\|\A^*x^{k_i}+\B^*y^{k_i}-c\|^2+\sum_{j=k_i}^{k}\xi_j.
\end{aligned}
\end{equation}
Therefore, we get
\begin{equation}\label{lphi}
\begin{aligned}
&\lim \limits_{k\rightarrow\infty} \phi_{k+1}(\bar{x},\bar{y},\bar{z})+\sigma\mu(2-\rho)\|\A^*x^{k+1}+\B^*y^{k+1}-c\|^2=0,
\end{aligned}
\end{equation}
which, together with (\ref{ite}), implies that
\begin{align*}
&\lim \limits_{k\rightarrow\infty}\|z^{k+1}_{e}+\sigma(\rho-1)\B^*y^{k+1}_{e}\|=0,\\
&\lim \limits_{k\rightarrow\infty}\|x^{k+1}_{e}\|_{\widehat{\Sigma}_f+\S+\sigma\beta(2-\rho)\A\A^*}=0,\\
&\lim \limits_{k\rightarrow\infty}\|y^{k+1}_{e}\|_{\widehat{\Sigma}_g+\T+\sigma\beta(2-\rho)\B\B^*}=0.
\end{align*}
From the assumption
$\widehat{\Sigma}_f+\S+\sigma\beta(2-\rho)\A\A^*\succ 0$ and $\widehat{\Sigma}_g+\T+\sigma\beta(2-\rho)\B\B^*\succ 0$  stated in (\ref{ias}), we get $\lim \limits_{k\rightarrow\infty}x^k=\bar{x}$ and $\lim \limits_{k\rightarrow\infty}y^k=\bar{y}$.
Further, it is fact that
$$
\begin{aligned}
\|z^{k+1}_{e}\|
\leq \|z^{k+1}_{e}+\sigma(\rho-1)\B^*y^{k+1}_{e}\|+\sigma(\rho-1)\|\B^*y^{k+1}_{e}\|,
\end{aligned}
$$
and hence $\lim \limits_{k\rightarrow\infty}z^k=\bar{z}$. Therefore, the whole sequence $\{(x^k,y^k,z^k)\}$ converges to
$(\bar{x},\bar{y},\bar{z})$, which the unique limit of the sequence. This completes the proof of part ({\bf I}).

({\bf II})
Noting that $\frac{1}{2}\widehat{\Sigma}_g+\T \succeq 0$ is assumed, and using the Cauchy-Schwarz inequality, we have
\begin{equation}\label{iip}
\begin{aligned}
&\sigma(2-\rho)\|\A^*x^{k+1}+\B^*y^k-c\|^2\\[2mm]
=&\sigma(2-\rho)\|\A^*x^k+\B^*y^k-c+\A^*(x^{k+1}-x^k)\|^2\\[2mm]
=&\sigma(2-\rho)\|\A^*x^k+\B^*y^k-c\|^2+\sigma(2-\rho)\|\A^*(x^{k+1}-x^k)\|^2\\[2mm]
&\quad+2\sigma(2-\rho)\langle \A^*x^k+\B^*y^k-c, \A^*(x^{k+1}-x^k)\rangle\\[1mm]
\geq&-\sigma(2-\rho)\|\A^*x^k+\B^*y^k-c\|^2+\frac{1}{2}\sigma(2-\rho)\|\A^*(x^{k+1}-x^k)\|^2.
\end{aligned}
\end{equation}
By the above formula and using the definition of $\theta_{k+1}$, for any $\rho \in (0,2)$ and $\lambda \in (0,1]$, we get
\begin{equation}\label{iill}
\begin{aligned}
&(1-\lambda)[\theta_{k+1}+\sigma(2-\rho)\|\A^*x^{k+1}+\B^*y^k-c\|^2]\\[2mm]
\geq &(1-\lambda)\|x^{k+1}-x^k\|^2_{\frac{1}{2}\Sigma_f+\S+\frac{1}{2}\sigma(2-\rho)\A\A^*}
+(1-\lambda)\|y^{k+1}-y^k\|^2_{\frac{1}{2}\Sigma_g+\T}\\[2mm]
&\quad -\sigma(1-\lambda)(2-\rho)\|\A^*x^k+\B^*y^k-c\|^2.
\end{aligned}
\end{equation}
Using (\ref{MG}) and recalling the definition of $\theta_{k+1}$, we obtain, for any $\rho \in (0,2)$ and $\lambda \in (0,1]$, that
\begin{equation}\label{iil}
\begin{aligned}
&\lambda[\theta_{k+1}+\sigma(2-\rho)\|\A^*x^{k+1}+\B^*y^k-c\|^2]\\[2mm]
\geq& \lambda\|x^{k+1}-x^k\|^2_{\frac{1}{2}\Sigma_f+\S}+\lambda\|y^{k+1}-y^k\|^2_{\frac{1}{2}\Sigma_g+\T}
+\sigma\lambda(2-\rho)\|\A^*x^{k+1}+\B^*y^{k+1}-c\|^2\\[2mm]
&\quad+\frac{\lambda(2-\rho)}{\rho}(\zeta_{k+1}-\zeta_k)+\frac{\sigma\lambda(2-\rho)^2}{\rho}\|\B^*(y^{k+1}-y^k)\|^2.
\end{aligned}
\end{equation}
Adding both sides of (\ref{iill}) and (\ref{iil}), for any $\rho \in (0,2)$ and $\lambda \in (0,1]$, we know that
\begin{equation}\label{iiaa}
\begin{aligned}
&\theta_{k+1}+\sigma(2-\rho)\|\A^*x^{k+1}+\B^*y^k-c\|^2\\[2mm]
\geq& \|x^{k+1}-x^k\|^2_{\frac{1}{2}\Sigma_f+\S+\frac{1}{2}\sigma(1-\lambda)(2-\rho)\A\A^*}
+\|y^{k+1}-y^k\|^2_{\frac{1}{2}\Sigma_g+\T+\frac{\lambda(2-\rho)^2}{\rho}\sigma\B\B^*}
+\frac{\lambda(2-\rho)}{\rho}\big(\zeta_{k+1}-\zeta_k\big)\\[2mm]
& +\sigma(1-\lambda)(2-\rho)\big(\|\A^*x^{k+1}+\B^*y^{k+1}-c\|^2-\|\A^*x^k+\B^*y^k-c\|^2\big)\\[2mm]
& +\sigma(2\lambda-1)(2-\rho)\|\A^*x^{k+1}+\B^*y^{k+1}-c\|^2.
\end{aligned}
\end{equation}
Substituting this inequality into (\ref{in}), and using the notations in (\ref{MN}) and (\ref{v}), we get the desired inequality (\ref{ii}).

We now prove that $\{(x^k,y^k,z^k)\}$ is bounded. Note that $\rho \in (0,2)$ and $\lambda \in (1/2,1]$ is assumed. Since $\widehat{\Sigma}_g \succeq \Sigma_g$, we have $\widehat{\Sigma}_g+\T \succeq \frac{1}{2}\Sigma_g+\T \succeq 0$.
From $\M_f \succeq 0$ and $\N_g \succ 0$ which are assumed in (\ref{iimn}), we have $\phi_{k+1}(\bar{x},\bar{y},\bar{z})\geq 0 $, $\vartheta_{k+1}\geq 0$ and $\zeta_{k+1}\geq 0$. From (\ref{ii}), we see that the sequence $\{\phi_{k+1}(\bar{x},\bar{y},\bar{z})+\zeta_{k+1}\}$ is bounded, and
\begin{equation}\label{lv}
\lim\limits_{k\rightarrow\infty} \vartheta_{k+1}=0,
\end{equation}
and
\begin{equation}\label{lzz}
\lim\limits_{k\rightarrow\infty}(\sigma\rho)^{-1}\|z^{k+1}-z^k+\sigma(\rho-1)\B^*(y^{k+1}-y^k)\|
=\lim\limits_{k\rightarrow\infty}\|\A^*x^{k+1}+\B^*y^{k+1}-c\|=0.
\end{equation}
From (\ref{lv}), using the definition of $\vartheta_{k+1}$ in (\ref{v}) and the fact $\N_g \succ 0$ in (\ref{iimn}), we have
\begin{equation}\label{lmn}
\begin{aligned}
\lim\limits_{k\rightarrow\infty}\|x^{k+1}-x^k\|_{\M_f}=0 \quad \mbox{and} \quad
\lim\limits_{k\rightarrow\infty}\|y^{k+1}-y^k\|=0,
\end{aligned}
\end{equation}
and, as $k\rightarrow\infty$, that
$$
\|\A^*(x^{k+1}-x^k)\|\leq \|\A^*x^{k+1}+\B^*y^{k+1}-c\|+\|\A^*x^k+\B^*y^k-c\|+\|\B^*(y^{k+1}-y^k)\|\rightarrow 0.
$$
Using the above relation and (\ref{lmn}), we get
$$
\begin{aligned}
\lim\limits_{k\rightarrow\infty}\|x^{k+1}-x^k\|_{\frac{1}{2}\Sigma_f+\S+\sigma(2-\rho)\A\A^*}
=\lim\limits_{k\rightarrow\infty} \big(\|x^{k+1}-x^k\|_{\M_f}+\frac{1}{2}\sigma(1+\lambda)(2-\rho)\|\A^*(x^{k+1}-x^k)\|\big)=0.
\end{aligned}
$$
Since $\frac{1}{2}\Sigma_f+\S+\sigma(2-\rho)\A\A^*\succ 0$, we obtain
\begin{equation}\label{lxii}
\begin{aligned}
\lim\limits_{k\rightarrow\infty}\|x^{k+1}-x^k\|=0.
\end{aligned}
\end{equation}

Recall that the sequence $\phi_{k+1}(\bar{x},\bar{y},\bar{z})$ has been shown to be bounded. By the definition of $\phi_{k+1}(\bar{x},\bar{y},\bar{z})$, we obtain that the sequences $\{\|z^{k+1}_{e}+\sigma(\rho-1)\B^*y^{k+1}_{e}\|\}$, $\{\|x^{k+1}_{e}\|_{\widehat{\Sigma}_f+\S}\}$ and
$\{\|y^{k+1}_{e}\|_{\widehat{\Sigma}_g+\T+\sigma(2-\rho)\B\B^*}\}$ are all bounded.
Because $\widehat{\Sigma}_g+\T+\sigma(2-\rho)\B\B^* \succeq \frac{1}{2}\Sigma_g+\T+\sigma(2-\rho)\B\B^*\succ0$, then the sequence $\{\|y^{k+1}\|\}$ is bounded. The boundedness of $\{\|z^{k+1}_{e}+\sigma(\rho-1)\B^*y^{k+1}_{e}\|\}$
and $\{\|y^{k+1}\|\}$ further indicates that the sequence $\{\|z^{k+1}\|\}$ is also bounded.
Furthermore, by the fact
$$
\begin{aligned}
\|A^*x^{k+1}_{e}\| &\leq \|\A^*x^{k+1}+\B^*y^{k+1}-c\|+\|\B^*y^{k+1}_{e}\| \\[2mm]
&\leq (\sigma\rho)^{-1}\|z^{k+1}+\sigma(\rho-1)\B^*y^{k+1}\|
+(\sigma\rho)^{-1}\|z^k+\sigma(\rho-1)\B^*y^k\|+\|\B^*y^{k+1}_{e}\|,
\end{aligned}
$$
we can see that the sequence $\{\|\A^*x^{k+1}_{e}\|\}$ is bounded. Therefore,
$\{\|x^{k+1}_{e}\|_{\widehat{\Sigma}_f+\S+\sigma(2-\rho)\A\A^*}\}$ is bounded. It shows that the sequence
$\{\|x^{k+1}\|\}$ is also bounded because of $\widehat{\Sigma}_f+\S+\sigma(2-\rho)\A\A^* \succeq \frac{1}{2}\Sigma_f+\S+\sigma(2-\rho)\A\A^*\succ0$. Thus, the sequence $\{(x^k,y^k,z^k)\}$ is bounded.

Since the sequence $\{(x^k,y^k,z^k)\}$ is bounded, there admits at least one subsequence $\{(x^{k_i},y^{k_i},z^{k_i})\}$ converging to a cluster point, say $(x^\infty,y^\infty,z^\infty)$. We next prove that $(x^\infty,y^\infty)$ is an optimal solution to problem (\ref{pro}) and $z^\infty$ is the corresponding Lagrange multiplier.
Recall the first-order optimality condition of (\ref{mgadmm}), and along the subsequence $\{(x^{k_i},y^{k_i},z^{k_i})\}$, we know that
\begin{equation}\label{iifooc}
\left\{
\begin{array}{l}
0\in \partial p(x^{{k_i}+1})+(\widehat{\Sigma}_f+\S)(x^{{k_i}+1}-x^{k_i})+\nabla f(x^{k_i})+
\A[z^{k_i}+\sigma(\A^*x^{{k_i}+1}+\B^*y^{k_i}-c)],\\[2mm]
0\in \partial q(y^{{k_i}+1})+(\widehat{\Sigma}_g+\T)(y^{{k_i}+1}-y^{k_i})+\nabla g(y^{k_i})
+\B[z^{k_i}+\sigma\rho(\A^*x^{{k_i}+1}+\B^*y^{{k_i}+1}-c)\\[2mm]
\qquad +\sigma(1-\rho)\B^*(y^{{k_i}+1}-y^{k_i})].
\end{array}
\right.
\end{equation}
Taking limits on both sides of (\ref{iifooc}), using (\ref{lzz}), (\ref{lmn})and (\ref{lxii}), we have
\begin{equation}\label{iKKT}
\begin{aligned}
0\in \partial p(x^\infty)+\nabla f(x^\infty)+\A{z^\infty}, \quad 0\in \partial q(y^\infty)+\nabla g(y^\infty)+\B{z^\infty}
\quad \mbox{and} \quad \A^*x^\infty+\B^*y^\infty=c,
\end{aligned}
\end{equation}
which indicates that $(x^\infty,y^\infty)$ is an optimal solution to problem (\ref{pro}) and $z^\infty$ is the corresponding Lagrange multiplier.

Finally, we need to show that $(x^\infty,y^\infty,z^\infty)$ is the unique limit point of the sequence $\{(x^k,y^k,z^k)\}$. Without loss of generality, we can apply the inequality (\ref{iqq}) and (\ref{ii}), with
$(\bar{x},\bar{y},\bar{z})=(x^\infty,y^\infty,z^\infty)$ to know that
$$
\lim\limits_{k\rightarrow\infty} \phi_{k+1}(\bar{x},\bar{y},\bar{z})=0
\quad \mbox{and} \quad \lim\limits_{k\rightarrow\infty} \zeta_{k+1}=0,
$$
which indicates
$$
\lim \limits_{k\rightarrow\infty}\|z^{k+1}_{e}+\sigma(\rho-1)\B^*y^{k+1}_{e}\|=0,
$$
$$
\lim \limits_{k\rightarrow\infty}\|x^{k+1}_{e}\|_{\widehat{\Sigma}_f+\S}=0
\quad \mbox{and} \quad
\lim \limits_{k\rightarrow\infty}\|y^{k+1}_{e}\|_{\widehat{\Sigma}_g+\T+\sigma(2-\rho)\B\B^*}=0.
$$

Since $\widehat{\Sigma}_g+\T+\sigma(2-\rho)\B\B^*\succ 0$, we have $\lim \limits_{k\rightarrow\infty}y^k=\bar{y}$. Similar to case ({\bf I}), we can see that $\lim \limits_{k\rightarrow\infty}z^k=\bar{z}$. Using the fact that
$$
\begin{aligned}
\|A^*x^{k+1}_{e}\| &\leq \|\A^*x^{k+1}+\B^*y^{k+1}-c\|+\|\B^*y^{k+1}_{e}\| \\[2mm]
&\leq (\sigma\rho)^{-1}\|z^{k+1}_{e}+\sigma(\rho-1)\B^*y^{k+1}_{e}\|
+(\sigma\rho)^{-1}\|z^k_{e}+\sigma(\rho-1)\B^*y^k_{e}\|+\|\B^*y^{k+1}_{e}\| \rightarrow 0,
\end{aligned}
$$
we obtain
$$
\lim \limits_{k\rightarrow\infty}\|x^{k+1}_{e}\|_{\widehat{\Sigma}_f+\S+\sigma(2-\rho)\A\A^*}=0.
$$
It follows that $\lim \limits_{k\rightarrow\infty}x^k=\bar{x}$ by the positive definiteness of $\widehat{\Sigma}_f+\S+\sigma(2-\rho)\A\A^*$. Therefore, the whole sequence $\{(x^k,y^k,z^k)\}$ converges to
$(\bar{x},\bar{y},\bar{z})$, i.e., the unique limit of the sequence.
\end{proof}

\section{Numerical experiments}\label{section6}

%%%%%%%%%%%%%%%%%%%%%%%%%%%%%%%%%%%%%%%%%%%
In this section, we construct a series of numerical experiments using
simulated convex composite optimization problems and sparse inverse covariance matrix estimation problems
to demonstrate the feasibility and efficiency of the algorithm MGADMM.
Besides, we also test against the state-of-the-art algorithm sPADMM of Li et al. \cite{LI} and sGS-ADMM of Li \& Xiao \cite{LIXIAO} to evaluate the algorithms' performance.
All the experiments are performed with Microsoft Windows 10 and MATLAB R2018a, and run on a desktop PC with an Intel Core i7-8565 CPU at 1.80 GHz and 8 GB of memory.

\subsection{Test on simulated convex composite optimization problems}
In this test, we focus on the following convex composite quadratic optimization problem which has been considered in \cite{LI}:
\begin{equation}\label{pro1}
\min_{x \in \mathbb{R}^{n}, y\in \mathbb{R}^{m}}  \Big \{\frac{1}{2}\langle x, Qx \rangle-\langle b,x \rangle+\frac{\gamma}{2}\|\Pi_{\mathbb{R}_{+}^{m}}(D(d-Hx))\|^2+\mu \|x\|_{1}+\delta_{\mathbb{R}^{m}_{+}}(y) \ \big | \ Hx+y=c\Big \},
\end{equation}
where $Q\in \mathbb{R}^{n\times n}$ is a symmetric and positive semidefinite matrix (may not be positive definite), $\Pi_{\mathbb{R}_{+}^{m}}(\cdot)$ denotes the projection onto $\mathbb{R}^{m}_{+}$, $\|x\|_1:=\sum_{i=1}^{n}|x_i|$, and $\delta_{\mathbb{R}^{m}_{+}}(\cdot)$ is an indicator function on $\mathbb{R}^{m}_{+}$. $H\in \mathbb{R}^{m\times n}$, $b\in \mathbb{R}^n$, $c\in \mathbb{R}^{m}$, $d (\leq c) \in \mathbb{R}^{m}$, $\gamma\geq0$ and $\mu>0$ are given data. In addition, $D$ is a positive definite diagonal matrix chosen to normalize each row of $H$ to have the unit norm. Clearly, model (\ref{pro1}) is in the form of (\ref{pro}) if taking
\begin{align*}
&p(x):=\mu \|x\|_1, \quad f(x):=\frac{1}{2}\langle x, Qx\rangle-\langle b,x\rangle+\frac{\gamma}{2}\|\Pi_{\mathbb{R}_{+}^{m}}(D(d-Hx))\|^2,\\ &q(y):=\delta_{\mathbb{R}^{m}_{+}}(y), \quad g(y):= 0, \quad \A^*:=H, \quad \B^*:=I.
\end{align*}
The associated KKT system to problems (\ref{pro1}) is with the following form:
$$
Hx+y-c=0, \quad \nabla f(x)+H^* z+v=0, \quad y\geq 0, \quad z\geq 0, \quad y\circ z=0, \quad v \in \partial \mu \|x\|_1,
$$
where $\circ$ denotes the elementwise product, $z$ and $v$ denote the dual variables.

In this part, we employ MGADMM with $\rho=1.9$ and majorized sPADMM with $\tau=1.618$  to solve the problem (\ref{pro1}), respectively.
For the sake of fairness, we choose the same proximal terms and operators as \cite{LI}, i.e.,
\begin{align*}
&\widehat{\Sigma}_{f}=Q+\gamma H^{*}D^2 H, \quad \Sigma_f=Q, \quad \widehat{\Sigma}_g=0=\Sigma_g,\\
&\S=\lambda_{\max}(\widehat{\Sigma}_{f}+\sigma H^* H) I-(\hat{\Sigma}_{f}+\sigma H^* H), \quad \T=0.
\end{align*}
With these choices, it is easy to verify that the conditions (\ref{ias}), (\ref{sigT}), (\ref{iimn}) and (\ref{iixy}) in Theorem \ref{MG} are satisfied, and hence, the convergence of MGADMM is guaranteed.

We terminate the iterative process if the KKT residual is not greater than a given small tolerance, that is
$$
Res:=\max\Big\{\frac{\|Hx^{k+1}+y^{k+1}-c\|}{1+\|c\|},\quad \frac{\|\nabla f(x^{k+1})+H^*z^{k+1}+v^{k+1}\|}{1+\|b\|}\Big\}\leq 1e-5.
$$
Besides, we choose the zero vector as an initial value of each variable, and take the parameters' values as $\mu=5 \sqrt{n}$ and $\rho=1.9$.
The penalty parameter is initialized as $\sigma=0.8$ and updated dynamically at the whole iterative process.

Using these notations, we can observe that, while MGADMM is used to solve the problem (\ref{pro1}), each subproblem may admit closed-form solutions.
And then, the Step 1 of MGADMM can be states as follows. In the algorithm, the symbol $\B_{\infty}^{(l)}$ denotes a $\ell_{\infty}$-norm ball with radius $r>0$, i.e.,  $\B^{(l)}_{\infty}:=\{x:\|x\|_\infty\leq 1\}$, and the symbol $\Pi_{\C}(\cdot)$ represents the metric projection onto a closed convex set $\C$.

\begin{algorithm}
\noindent
{\bf Step 1 of MGADMM:}
\vskip 1.0mm \hrule \vskip 1mm
\noindent
\textbf{Step 1.1} Compute
$$
x^{k+1}:=r^k-\Pi_{B_{\infty}^{(\frac{\mu}{\omega_1})}}(r^k),
$$
where $\omega_1=\lambda_{\max}(\widehat{\Sigma}_{f}+\sigma H^* H)$ and $r^k=x^k-\frac{1}{\omega_1}\big[\nabla f(x^k)+\sigma H^*(Hx^k+y^k-c+\frac{z^k}{\sigma}) \big]$.\\
\textbf{Step 1.2} Compute
$$
y^{k+1}:=\Pi_{\mathbb{R}_{+}^m}\Big((1-\rho)y^k+\rho c-\rho Hx^{k+1}-\frac{z^k}{\sigma}\Big).
$$
\textbf{Step 1.3} Compute
$$
z^{k+1}:=z^k+\sigma\Big(\rho Hx^{k+1}-(1-\rho)y^k+y^{k+1}-\rho c\Big).
$$
\end{algorithm}

In this test, we choose different pairs of $(m,n)$ to generate the data used in (\ref{pro1}) and the numerical results with two kinds of $\gamma$ are listed in Tables \ref{tab11}-\ref{tab12}, which include the number of iterations (Iter), the computing time (Time), the final KKT residuals of the solution (Res).
From these tables, we find that MGADMM can always use slightly fewer iterations than sPADMM with an competitive computing time to achieve similar accuracy solutions.

\begin{table}[h]
\centering
\caption{Numerical results of sPADMM and MGADMM with $\gamma=0$.}
\setlength{\tabcolsep}{4mm}{
\begin{tabular}{|cc||c|c|c|c|c|c|}
\hline
\multicolumn{2}{|c||}{} & \multicolumn{3}{c|}{sPADMM} & \multicolumn{3}{c|}{MGADMM}   \\
\hline
m&   n& Iter & Time & Res & Iter & Time & Res         \\
\hline
500  & 200   & 5966 & 1.9 & 9.98e-6 & 5032  & 1.5 & 9.98e-6\\
500  & 500   & 794 & 1.3 & 9.96e-6 & 747  & 1.2 & 9.95e-6\\
500  & 1000  & 628  & 2.1 & 9.94e-6 & 579   & 2.0 & 9.98e-6\\
1000 & 500   & 1354 & 4.1 & 9.97e-6 & 1179  & 3.4 & 9.89e-6\\
1000 & 1000  & 758  & 4.3 & 9.89e-6 & 696   & 3.9 & 9.97e-6\\
1000 & 2000  & 630  & 8.3 & 9.91e-6 & 608   & 7.7 & 9.93e-6\\
2000 & 1000  & 3088 & 32.6 & 9.99e-6 & 2573  & 25.7 & 9.99e-6\\
2000 & 2000  & 778  & 17.6 & 9.88e-6 & 720   & 15.4 & 9.96e-6\\
2000 & 4000  & 587  & 30.9 & 9.95e-6 & 575  & 30.2 & 9.97e-6\\
4000 & 2000  & 1816 & 74.6 & 9.99e-6 & 1553 & 63.1 & 9.99e-6\\
4000 & 4000  & 708 & 68.7 & 9.96e-6 & 683  & 64.4 & 9.99e-6\\
8000 & 4000  & 1334 & 249.9 & 9.99e-6 & 1103 & 207.6 & 9.99e-6\\
8000 & 8000  & 793  & 356.8 & 9.97e-6 & 779  & 337.1 & 9.97e-6\\
\hline
\end{tabular}}
\label{tab11}
\end{table}

\begin{table}[h]
\centering
\caption{Numerical results of majorized sPADMM and MGADMM with $\gamma=2 \mu$.}
\setlength{\tabcolsep}{4mm}{
\begin{tabular}{|cc||c|c|c|c|c|c|}
\hline
\multicolumn{2}{|c||}{} & \multicolumn{3}{c|}{sPADMM} & \multicolumn{3}{c|}{MGADMM}   \\
\hline
m&   n& Iter & Time & Res & Iter & Time & Res         \\
\hline
500  & 200   & 7934 & 2.5 & 9.99e-6 & 6791  & 1.8 & 9.98e-6\\
500  & 500   & 480 & 0.8 & 9.99e-6 & 450  & 0.7 & 9.80e-6\\
500  & 1000  & 458  & 1.5 & 9.99e-6 & 445   & 1.4 & 9.92e-6\\
1000 & 500   & 1715 & 5.1 & 9.99e-6 & 1423  & 3.8 & 9.99e-6\\
1000 & 1000  & 430  & 2.4 & 9.96e-6 & 409   & 2.2 & 9.94e-6\\
1000 & 2000  & 478  & 6.1 & 9.88e-6 & 451   & 5.6 & 9.94e-6\\
2000 & 1000  & 1354 & 13.7 & 9.99e-6 & 1228  & 12.4 & 9.99e-6\\
2000 & 2000  & 341  & 7.8 & 9.88e-6 & 325   & 7.3 & 9.85e-6\\
2000 & 4000  & 406  & 21.6 & 9.95e-6 & 403  & 21.0 & 9.95e-6\\
4000 & 2000  & 1214 & 50.6 & 9.99e-6 & 995 & 40.6 & 9.99e-6\\
4000 & 4000  & 312  & 33.1 & 9.96e-6 & 306  & 31.8 & 9.93e-6\\
8000 & 4000  & 1028 & 195.3 & 9.99e-6 & 841 & 159.1 & 9.97e-6\\
8000 & 8000  & 315  & 172.6& 9.91e-6 & 314  & 166.2 & 9.98e-6\\
\hline
\end{tabular}}
\label{tab12}
\end{table}

\subsection{Test on sparse inverse covariance matrix estimation problems}

In this part, we further evaluate the efficiency of MGADMM by the using of a typical inverse covariance matrix estimation problem in the field of statistics.
Besides, we also test against the recent algorithm sGS-ADMM of Li \& Xiao \cite{LIXIAO} for performance comparison.
Let $\mathcal{S}^p$, $\mathcal{S}^p_{+}$ be the sets of all $p\times p$ symmetric, positive semi-definite matrices, respectively.
As in \cite{LIXIAO,YANG}, we consider the following log-determinant optimization problem with group lasso regularization, that is
\begin{equation}\label{GMLE}
\min_{X\in \S^{p}_{+}}\big\{\langle S, X\rangle-\log\det X+\sum\limits_{l=1}^{r}\omega_{l}\|X_{\{g_{l}\}}\|_{\#} \; | \;
\H X=b \big\},
\end{equation}
where $S$ is a known sample covariance matrix, $r$ is the number of groups, $\omega_{l}>0$ is an adaptive turn parameter, $X_{\{g_{l}\}}$ is a vector constructed by the components of $X$ with index set $g_{l}$, $\|\cdot\|_{\#} $ is the $\l_{\#}$-norm with popular choices $\#=\infty$, $1$, $2$, $\H: \S^p\rightarrow \mathbb{R}^m $ is a generic linear mapping and $b\in \mathbb{R}^m$.

For convenience, we let $\P_{l}$ be an operation such that $\P_{l} X=X_{\{g_{l}\}}$ for each group $l$, and then denote $\P:=[\P_1;\P_2;...;\P_r]$.
It is not hard to deduce that the Lagrangian dual of the problem (\ref{GMLE}) obeys the following form with a triple of dual variables $u$, $v$ and $Z$, i.e.,
\begin{equation}\label{DUAL}
\begin{array}{ll}
\min\limits_{v,u,Z} & \delta_{\B_{q}^{(w_l)}}(v)-\langle u, b\rangle-\log\det Z+\delta_{\S_{++}^{p}}(Z)\\
\text{s.t.}         & \H^*u+\P^*v+Z=S,
\end{array}
\end{equation}
where $\B_{q}^{(w_l)}:=\{v\in \mathbb{R}^s: \|v_l\|_q\leq w_l, l=1,2,\ldots,r\}$, $1/\#+1/q=1$ and $1/\infty=0$ is assumed.
Obviously, the dual problem (\ref{DUAL}) can be expressed as the form of (\ref{pro}) if we denoting
$$
p(x):=\delta_{B_{q}^{w_l}}(v), \quad f(x):=-\langle u, b\rangle, \quad q(y):=\delta_{\S_{++}^{p}}(Z), \quad g(y):=-\log\det Z,
$$
with
$$
x:=(u;v),
\quad y:=Z, \quad \A^{*}:=\left(
                \begin{array}{cc}
                  \P^* & \H^* \\
                \end{array}
              \right), \quad \B^*:=\I.
$$
From the corresponding KKT system, it is reasonable to set the stopping criterion as
\begin{equation}\label{resss}
Res:=\{R_D,R_P,R_G\}<1e-3,
\end{equation}
where
$$
R_{P}:=\frac{\|\H X-b\|}{1+\|b\|}, \quad R_{D}:=\max_l\Big\{\frac{\|\H^*u+\P^*v+Z-S\|}{1+\|S\|}, \frac{\|v_l\|_q-w_l}{w_l}\Big\}, \quad R_G:=\frac{|\text{pobj-dobj}|}{1+|\text{pobj}|+|\text{dobj}|},
$$
with
$$
\text{pobj}:=\langle S,X\rangle-\logdet X+\sum_{l=1}^{r}w_{l}\|X_{\{g_{l}\}}\|_{\#}, \quad\text{dobj}:=n+\logdet Z+\langle u,b \rangle.
$$
In this test, we apply majorized sGS-ADMM with $\tau=1.618$ and MGADMM with $\rho=1.99$ to solve the dual problem (\ref{DUAL}).
The proximal terms involved at the subproblems are chosen as
$$
\S:=\diag(\S_1,\S_2), \quad \S_1:=\sigma(\lambda_1-1)\I, \quad \S_2:=\sigma(\lambda_2 \I-\H\H^*), \quad \T:=0,
$$
where $\lambda_1=1.01$ and $\lambda_2=1.01\lambda_{\max}(\H\H^*)$.
Moreover, the self-adjoint and positive semidefinite operators $\Sigma_f$, $\widehat{\Sigma}_f$, $\Sigma_g$ and $\widehat{\Sigma}_g$ used at the majorization technique are chosen as
$$
\Sigma_f\equiv\widehat{\Sigma}_f\equiv 0, \quad \Sigma_g:=0, \quad \widehat{\Sigma}_g:=\sigma \xi \I,
$$
where $\xi>0$ is a positive constant that enables $\widehat{\Sigma}_g$ such that the inequality (\ref{G2}) holds.
Here, we set $\xi=0.01$, which has been illustrated to be a suitable choice in the experiments' preparations.

By the using of these notations, the majorized function of $g(y)$, or $-\log\det Z$, has the following explicit form at a given $Z'$, i.e.,
$$
\hat{g}(y,y'):=-\log\det Z'+\langle Z-Z', -(Z')^{-1}\rangle+\frac{\sigma \xi}{2}\|Z-Z'\|^2,
$$
and hence, the $y$-subproblem in Step 1 of MGADMM reduces to
\begin{align*}
Z^{k+1}:=&\argmin\limits_{Z\in \mathbb{R}^{p\times p}} \ \Big\{\delta_{\S_{++}^{p}}(Z)+\langle Z,-(Z^k)^{-1}\rangle+\frac{\sigma \xi}{2}\|Z-Z^k\|^2+\langle X^k,\H^*u^{k+1}+\P^*v^{k+1}+Z-S \rangle\\
&\qquad\qquad+\frac{1}{2}\|Z-Z^k\|_{\T}^2+\frac{\sigma}{2}\|\rho \H^*u^{k+1}+\rho \P^*v^{k+1}-(1-\rho)Z^k+Z-\rho S\|^2\Big\},\\
=&\Phi_{\frac{1}{\sigma}}\Big(Z^k-\frac{\rho}{\xi+1}\big(\H^*u^{k+1}+\P^*v^{k+1}+Z^k-S+\frac{1}{\sigma \rho}(X^k-(Z^k)^{-1})\big)\Big),
\end{align*}
where $\Phi_{\cdot}(\cdot)$ is a linear operator which is used to guarantee the positive definiteness of matrix $Z$, see e.g., \cite[Proposition 2.2]{LIXIAO}. On the other hand, noting that $x:=(u;v)$ and $v$ is restricted in a closed convex set $\B_{q}^{(w_l)}$, it is reasonable to use the sGS iteration to split the $x$-subproblem into a triple of smaller subproblem in a manner of $u\rightarrow v\rightarrow u$.
Additionally, from the sGS decomposition theorem of Li et al. \cite{SGSTH}, it is easy to deduce that this type of iteration can be treated together in the sense of adding an semi-proximal term.
Subsequently, the conditions (\ref{ias}), (\ref{sigT}), (\ref{iimn}) and (\ref{iixy}) in
Theorem \ref{MG} are satisfied, so the convergence of MGADMM is guaranteed.

Taking everything together, we can state the main steps of MGADMM as follows:

\begin{algorithm}
\noindent
{\bf Step 1 of MGADMM:}
\vskip 1.0mm \hrule \vskip 1mm
\noindent
\textbf{Step 1.1} Compute
$$
\left\{
\begin{array}{l}
\label{u1}\tilde{u}^k:=u^k+\frac{b}{\sigma \lambda_2}-\frac{1}{\lambda_2}\H \Big(\H^*u^k+\P^*v^k+Z^{k}-S+\frac{1}{\sigma}X^k\Big),\\[2mm]
\label{v}v^{k+1}:=\Pi_{\B_q^{(w_l)}}\Big(v^k-\frac{1}{\lambda_1}\P\big(\H^*\tilde{u}^k+\P^*v^k+Z^k-S+\frac{1}{\sigma}X^k\big)\Big),\\[2mm]
\label{u2}u^{k+1}:=u^k+\frac{b}{\sigma \lambda_2}-\frac{1}{\lambda_2}\H \Big(\H^*u^k+\P^*v^{k+1}+Z^{k}-S+\frac{1}{\sigma}X^k\Big).
\end{array}
\right.
$$
\textbf{Step 1.2}Compute
$$
\label{z}Z^{k+1}:=\Phi_{\frac{1}{\sigma}}\Big(Z^k-\frac{\rho}{\xi+1}\big(\H^*u^{k+1}+\P^*v^{k+1}+Z^k-S+\frac{1}{\sigma \rho}(X^k-(Z^k)^{-1})\big)\Big).
$$
\textbf{Step 1.3} Update
$$
X^{k+1}:=X^k+\sigma(\rho \H^*u^{k+1}+\rho \P^*v^{k+1}-(1-\rho)Z^k+Z^{k+1}-\rho S).
$$
\end{algorithm}

Similar to the technique in \cite{LIXIAO}, we initialize the penalty parameter $\sigma_0=1$ and adjust it frequently at the iterative process.
In each test, we start the algorithm from $(v^0,u^0,Z^0,X^0)=(0,0,\I,\I)$ and terminate if KKT residual (\ref{resss}) is sufficiently small.
The comparison results of MGADMM and sGS-ADMM with with $\#=1$, $2$, and $\infty$ are shown in Table \ref{tab2}.
From the data in the table, we can see that for the three types of norm considered, MGADMM always requires fewer number of iterations than sGS-ADMM to achieve competitive accuracy solutions.

\begin{table}[h]
\centering\caption{Numerical results of majorized sGS-ADMM and MGADMM.}
\setlength{\tabcolsep}{4mm}{
\begin{tabular}{|c|c||c|c|c|c|c|c|}
\hline
\multicolumn{2}{|c||}{} & \multicolumn{3}{c|}{sGS-ADMM} & \multicolumn{3}{c|}{MGADMM}   \\
\hline
$\#$& $p \ | \ m$& Iter & Time & Res & Iter & Time & Res         \\
\hline
\multirow{3}{*}{1} & 200 $|$ 8812 & 28  &0.3 &3.88e-4 & 27 &0.2 &7.57e-4\\
\cline{2-8}
& 500 $|$ 55072 & 31 & 1.7& 1.22e-4& 30 &1.7 &6.51e-4\\
\cline{2-8}
& 1000 $|$ 222685 & 32 & 8.9 & 2.19e-4 & 31 & 8.9& 9.31e-4\\
\hline
\multirow{3}{*}{2} & 200 $|$ 8812 & 33 & 0.4&4.76e-4  & 32&0.4 &5.96e-4\\
\cline{2-8}
& 500 $|$ 55072 & 38 & 2.5 &4.18e-4 & 35 &2.3 &9.07e-4\\
\cline{2-8}
& 1000 $|$ 222685 & 57& 22.1 &5.37e-4  & 52&20.22 &9.33e-4\\
\hline
\multirow{3}{*}{$\infty$} & 200 $|$ 8812 & 82 & 3.0 &9.82e-4 & 72 &2.6 &9.11e-4\\
\cline{2-8}
& 500 $|$ 55072 &  264 & 62.1 & 5.61e-4& 240 & 59.1 &8.81e-4\\
\cline{2-8}
& 1000 $|$ 222685 &  841 & 947.3&9.27e-4 & 616 & 652.9& 9.45e-4 \\
\hline
\end{tabular}}
\label{tab2}
\end{table}

Overall, these limited experiments demonstrate that MGADMM is competitive with sPADMM on a kind of simulated convex composite optimization problems, and
performs better than sGS-ADMM on a type of sparse inverse covariance matrix estimation problems.
Therefore, we conclude that, by the using of the majorization technique, the generalized ADMM of Eckstein \& Bertsekas \cite{ECK} can be successfully extended to solve
the convex composite optimization problem (\ref{pro}) and always performs better than the algorithms compared with a suitable relaxation factor.

\section{Conclusions}\label{section7}

%%%%%%%%%%%%%%%%%%%%%%%%%%%%%%%%%%%%%%%%%%%
In optimization literature, it is well-known that the classical ADMM with an unit step-length is equivalent to DRs method, and that DRs is an instance of the proximal point algorithm with a special splitting operator.
It is also known that the generalized-ADMM is a variant of DRs according to using a relaxation factor to achieve faster convergence.
However, when the ADMM is directly employed to solve problem (\ref{pro}), the structure of nonsmooth+smooth hidden in the subproblems can not be utilized.
%The using of a majorization technique is a better choice on this issue, but only the classical ADMM is considered, and the probability of using the generalized-ADMM has never been studied.
In this paper, we attempted to remedy this defect, and showed that when the majorization technique is implemented, the corresponding subproblem has nonsmooth+quadratic structure and hence it can be decomposed into some smaller problems and solved efficiently.
The paper was concentrate on the convergence analysis and theoretically proved that our algorithm converges globally under some appropriate conditions.
The numerical experiments on some simulated convex composite optimization problems and sparse inverse covariance matrix estimation problems illustrated that our proposed algorithm is  promising.
At last, it should be mentioned that the nonergodic iteration complexity of the proposed algorithm can be easily proved, but it beyond the scope of this paper.

\section*{Acknowledgements}
%We would like to thanks professor L. Yang from Henan University
%for her valuable suggestions on the paper.
The work of Y. Xiao is supported by the National Natural Science Foundation of China (Grant No. 11971149).

\section*{References}
\bibliography{mgadmm}

\begin{thebibliography}{10}
\expandafter\ifx\csname url\endcsname\relax
  \def\url#1{\texttt{#1}}\fi
\expandafter\ifx\csname urlprefix\endcsname\relax\def\urlprefix{URL }\fi

\bibitem{CHEMMP}
L.~Chen, D.~Sun, K.-C. Toh, An efficient inexact symmetric gauss-seidel based
  majorized admm for high-dimensional convex composite conic programming,
  Mathematical Programming 161 (2017) 237--270.

\bibitem{CLA}
F.~H. Clarke, Optimization and nonsmooth analysis.

\bibitem{CUI}
Y.~Cui, X.~Li, D.~Sun, K.-C. Toh, On the convergence properties of a majorized
  alternating direction method of multipliers for linearly constrained convex
  optimization problems with coupled objective functions, Journal of
  Optimization Theory and Applications 169 (2016) 1013--1041.

\bibitem{DRS}
J.~Douglas, H.~H. Rachford, On the numerical solution of heat conduction
  problems in two and three space variables, Transactions of the American
  Mathematical Society 82 (1956) 421--439.

\bibitem{ECK}
J.~Eckstein, D.~P. Bertsekas, Total variation image restoration: overview and
  recent developmentsOn the Douglas-Rachford splitting method and the proximal
  point algorithm for maximal monotone operators, vol.~55, 1992.

\bibitem{FAZ}
M.~Fazel, T.~K. Pong, D.~Sun, P.~Tseng, Hankel matrix rank minimization with
  applications to system identification and realization, SIAM Journal on Matrix
  Analysis and Applications 34 (2013) 946--977.

\bibitem{GABAY83}
D.~Gabay, Applications of the method of multipliers to variational
  inequalities, Augmented Lagrangian Methods: Applications to the Numerical
  Solution of Boundary-Value Problems 15 (1983) 299--331.

\bibitem{GABAY76}
D.~Gabay, B.~Mercier, A dual algorithm for the solution of nonlinear
  variational problems via finite element approximation, Computers $\&$
  mathematics with applications 2 (1976) 17--40.

\bibitem{GLOWINSKI75}
R.~Glowinski, A.~Marroco, Sur l'approximation, par {\'e}l{\'e}ments finis
  d'ordre un, et la r{\'e}solution, par p{\'e}nalisation-dualit{\'e} d'une
  classe de probl{\`e}mes de dirichlet non lin{\'e}aires, ESAIM: Mathematical
  Modelling and Numerical Analysis-Mod{\'e}lisation Math{\'e}matique et Analyse
  Num{\'e}rique 9 (1975) 41--76.

\bibitem{HONG}
M.~Hong, T.-H. Chang, X.~Wang, M.~Razaviyayn, S.~Ma, Z.-Q. Luo, A block
  successive upper-bound minimization method of multipliers for linearly
  constrained convex optimization, Mathematics of Operations Research 45 (2020)
  833--861.

\bibitem{LI}
M.~Li, D.~Sun, K.-C. Toh, A majorized admm with indefinite proximal terms for
  linearly constrained convex composite optimization, SIAM Journal on
  Optimization 26 (2016) 922--950.

\bibitem{LIXIAO}
P.~Li, Y.~Xiao, An efficient algorithm for sparse inverse covariance matrix
  estimation based on dual formulation, Computational Statistics $\&$ Data
  Analysis 128 (2018) 292--307.

\bibitem{LIPHD}
X.~Li, A two-phase augmented lagrangian method for convex composite quadratic
  programming.

\bibitem{LIXD}
X.~Li, D.~Sun, K.-C. Toh, A schur complement based semi-proximal admm for
  convex quadratic conic programming and extensions, Mathematical Programming
  155 (2016) 333--373.

\bibitem{SGSTH}
X.~Li, D.~Sun, K.-C. Toh, A block symmetric gauss-seidel decomposition theorem
  for convex composite quadratic programming and its applications, Mathematical
  Programming 175 (2019) 395--418.

\bibitem{LINT}
T.~Lin, S.~Ma, S.~Zhang, An extragradient-based alternating direction method
  for convex minimization, Foundations of Computational Mathematics 17 (2017)
  35--59.

\bibitem{ROC1}
R.~Rockafellar, Convex analysis, Princeton University Press.

\bibitem{roc76}
R.~T. Rockafellar, Monotone operators and the proximal point algorithm, SIAM
  journal on Control and Optimization 14 (1976) 877--898.

\bibitem{RPCA}
J.~Wright, A.~Ganesh, S.~R. Rao, Y.~Peng, Y.~Ma, Robust principal component
  analysis: Exact recovery of corrupted low-rank matrices via convex
  optimization, Neural Information Processing Systems 58 (2009) 289--298.

\bibitem{WUC}
C.~Wu, Q.~Wang, Y.~Xiao, On the equivalence of peaceman-rachford splitting
  method and some typical alternating direction method of multipliers, Journal
  of Nonlinear and Convex Analysis 19 (2018) 1933--1944.

\bibitem{XIAO}
Y.~Xiao, L.~Chen, D.~Li, A generalized alternating direction method of
  multipliers with semi-proximal terms for convex composite conic programming,
  Mathematical Programming Computation 10 (2018) 533--555.

\bibitem{YANG}
J.~Yang, D.~Sun, K.-C. Toh, A proximal point algorithm for log-determinant
  optimization with group lasso regularization, SIAM Journal on Optimization 23
  (2013) 857--893.

\end{thebibliography}

\end{document}